\theoremstyle{plain}
\newtheorem{theorem}{Theorem}
\newtheorem{proposition}{Proposition}
\newtheorem{lemma}{Lemma}
\newtheorem{claim}{Claim}
\newtheorem{corollary}{Corollary}
\newtheorem{question}{Question}
\theoremstyle{remark}
\newtheorem*{remark}{Remark}
\DeclareSymbolFont{rsfscript}{OMS}{rsfs}{m}{n}
\DeclareSymbolFontAlphabet{\mathrsfs}{rsfscript}
\def\fb{finitely based}
\def\nfb{non\-finitely based}
\newcommand{\mA}{\mathcal{A}}
\newcommand{\mB}{\mathcal{B}}
\newcommand{\mC}{\mathcal{C}}
\newcommand{\mP}{\mathcal{P}}
\newcommand{\mR}{\mathcal{R}}
\newcommand{\mS}{\mathcal{S}}
\newcommand{\mT}{\mathcal{T}}
\newcommand{\ais}{ai-semi\-ring}
\DeclareMathOperator{\var}{var}
\DeclareMathOperator{\End}{End}
\renewcommand*\subjclass[2][2010]{\def\@subjclass{#2}\@ifundefined{subjclassname@#1}{\ClassWarning{\@classname}{Unknown edition (#1) of Mathematics Subject Classification; using '2010'.}}{\@xp\let\@xp\subjclassname\csname subjclassname@#1\endcsname}}
\renewcommand{\subjclassname}{\textup{2010} Mathematics Subject Classification}
\begin{document}

\title[The finite basis problem for endomorphism semirings]{The finite basis problem for the endomorphism semirings\\ of finite semilattices}
\thanks{Igor Dolinka was supported by the Personal grant F-121 of the Serbian Academy of Sciences and Arts.\linebreak Sergey~V. Gusev was supported  by the Ministry of Science and Higher Education of the Russian Federation (project FEUZ-2023-0022)}

\author[I. Dolinka]{Igor Dolinka}
\address{{\normalfont (I. Dolinka) Department of Mathematics and Informatics, University of Novi Sad, 21101 Novi Sad, Serbia}}
\email{dockie@dmi.uns.ac.rs}

\author[S. V. Gusev]{Sergey V. Gusev}
\address{{\normalfont (S. V. Gusev) Institute of Natural Sciences and Mathematics, Ural Federal University, 620000 Ekaterinburg, Russia}}
\email{sergey.gusb@gmail.com}
\email{m.v.volkov@urfu.ru}

\author[M. V. Volkov]{Mikhail V. Volkov}
\address{{\normalfont (M. V. Volkov) 620075 Ekaterinburg, Russia}}

\begin{abstract}
For every semilattice $\mA=(A,+)$, the set $\End(\mA)$ of its endomorphisms forms a semiring under pointwise addition and composition. We prove that if $\mA$ is finite, then the endomorphism semiring  $\End(\mA)$ has a finite identity basis if and only if $|A|\le 2$.
\end{abstract}

\keywords{Additively idempotent semiring, Endomorphism semiring of a semilattice, Finite basis problem, Inherently nonfinitely based algebras}

\subjclass{16Y60, 08B05}

\maketitle

\section*{Introduction}
\label{sec:introduction}

\subsection*{Semirings} A \emph{semiring} is an algebra $(S,+,\cdot)$ with two binary operations, addition $+$ and multiplication $\cdot$, such that
\begin{itemize}
\item[(1)] $(S,+)$ is a commutative semigroup, that is,
\[
a+(b+c)=(a+b)+c\ \text{ and }\  a+b=b+a\  \text{ for all }\ a,b,c\in S;
\]
\item[(2)] $(S,\cdot)$ is a semigroup, that is,
\[
a(bc)=(ab)c\ \text{ for all }\ a,b,c\in S;
\]
\item[(3)] the left and right hand distributive laws hold, that is,
\[
a(b+c)=ab+ac\ \text{ and }\ (a+b)c=ac+bc\ \text{ for all }\ a,b,c\in S.
\]
\end{itemize}

Quoting from Golan's monograph \cite{Golan}, ``Semirings abound in the mathematical world around us. Indeed, the first mathematical structure we encounter---the set of natural num\-bers---is a semiring.'' In this paper, the following rich source of semirings plays a role: for an arbitrary commutative semigroup $(A,+)$, the set of all its \emph{endomorphisms}, i.e., self-maps $A\to A$ respecting $+$, forms a semiring under pointwise addition and composition.

\subsection*{Additively idempotent semirings} As the theory of semirings has developed, it has become clear that many questions about semirings eventually reduce to two special cases: when the additive semigroup is an abelian group and when it is a \emph{semilattice}, that is, a commutative and idempotent semigroup. Semirings whose additive semigroups are abelian groups are nothing but rings. A semiring whose additive semigroup is a semilattice is referred to as an \emph{additively idempotent semiring} (\ais, for short). AI-semirings naturally arise in various areas of mathematics (e.g., idempotent analysis, tropical geometry, and optimization) and computer science, where rings and other ``more classical'' algebras fail to provide adequate tools; see the contributions in the conference volumes \cite{Guna98,LiMa:2005} for diverse examples. The reader should be warned, however, that besides the name, there are no immediate connections between \ais{}s and Artificial Intelligence.

\subsection*{``Universal'' \ais{}s} In ring theory, endomorphism rings of abelian groups serve as ``universal'' rings, as every ring embeds into the endomorphism ring of a suitable abelian group. Endomorphism semirings of semilattices play the same universal role for \ais{}s: every \ais{} embeds into the endomorphism semiring of a suitable semilattice (\!\cite[Theorem 2.4]{KP05}, preceded by \cite[Theorem 2.2(1)]{KRM97} in the case of finite \ais{}s). Refer to \cite{JK09,JKM09} for a study of structure properties of endomorphism semirings of semilattices, which has revealed further parallels with endomorphism rings of abelian groups (e.g., regarding properties related to congruences such as subdirect irreducibility and simplicity).

\subsection*{Our contribution} Our main result highlights a feature of endomorphism semirings of semilattices that stands in sharp contrast to properties known for rings. Recall that every finite ring (in particular, every endomorphism ring of a finite abelian group) is known to possess a finite identity basis (\!\cite{Kr,Lv}; see also \cite[Section 4.5]{Sapir} for a modern, streamlined proof). Here, we show that the endomorphism semiring of a finite semilattice admits no finite identity basis unless the semilattice has at most two elements. This result thus completes the investigation of the finite basis problem for the endomorphism semirings of finite semilattices, initiated by the first-named author more than 15 years ago~\cite{Dolinka09b}. It also extends (and improves) the solution to the finite basis problem for the endomorphism semirings of finite chains obtained by the second- and third-named authors~\cite{GusVol25}; we note that no results from~\cite{GusVol25} are used here, except for a few auxiliary facts.

\subsection*{Structure of the paper} In Section~\ref{sec:certain}, we discuss semilattices and their endomorphism semirings. We also introduce three six-element \ais{}s that play a crucial role in several of our arguments and provide the necessary background on semiring identities and varieties. Section~\ref{sec:methods} presents an overview of the three methods developed to address various instances of the finite basis problem for \ais{}s. Handling the endomorphism semirings of finite semilattices requires a combination of all these methods; this is the focus of Section~\ref{sec:end}. We conclude with a discussion of possible future work in Section~\ref{sec:conclude}.

\section{Preliminaries}
\label{sec:certain}
\subsection{Prerequisites} A fair effort has been made to keep the paper reasonably self-con\-tained. We use only a few standard notions from group theory and universal algebra, along with the concept of presenting semigroups by generators and relations. All of these can be found in the early chapters of the canonical textbooks \cite{BuSa81,Hall:1959,Howie:1995}. No specific knowledge of semirings or semigroups is assumed.

\subsection{Semilattices as ordered sets} We have defined a semilattice as a commutative idempotent semigroup. It is well known that this algebraic definition is equivalent to the following order-theoretic one: an upper semilattice is a partially ordered set in which every pair of elements has a least upper bound. Indeed, if a semigroup $(A,+)$ is commutative and idempotent, then the relation $\leqslant$ on $A$ defined by
\begin{equation}\label{eq:semilattice order}
a\leqslant a'\iff a+a'=a'\ \text{ for }\ a,a'\in A
\end{equation}
is a partial order, and for all $a,b\in A$, their sum $a+b$ is the least upper bound of $a$ and $b$ with respect to the order~\eqref{eq:semilattice order}. Conversely, if a partially ordered set $(A,\leqslant)$ contains a least upper bound for every pair $(a,b)\in A\times A$, then defining $a+b$ as the least upper bound of $a$ and $b$ yields an associative, commutative, and idempotent operation on $A$. Moreover, the order of $(A,\leqslant)$ can be recovered from this operation via~\eqref{eq:semilattice order}.

While the algebraic and order-theoretic viewpoints on semilattices are fully equivalent, in what follows we represent semilattices by Hasse diagrams rather than Cayley tables, as diagrams encode the addition operation both more concisely and more visually.

If $(S,+)$ is the additive semilattice of an \ais{} $(S,+,\cdot)$, then the distributive laws imply that multiplication is \emph{compatible} with the order~\eqref{eq:semilattice order}; that is, for all $a,b,c\in S$,
\[
a\leqslant b \implies ac\leqslant bc \ \text{ and  } \ ca\leqslant cb.
\]
This means that $(S,\leqslant,\cdot)$ is an \emph{ordered semigroup}. This connection explains why \ais{}s are sometimes called \emph{semilattice-ordered semigroups} in the literature.

\subsection{Endomorphism semirings of semilattices} By an \emph{endomorphism} of a semilattice $(A,+)$, we mean any self-map $\alpha\colon A\to A$ that preserves $+$ in the sense that, for all $a,b\in A$,
\[
(a+b)\alpha=a\alpha+b\alpha.
\]
Every semilattice endomorphism  $\alpha\colon A\to A$  preserves the order~\eqref{eq:semilattice order}, that is, for all $a,b\in A$,
\[
a\leqslant b\implies a\alpha\leqslant b\alpha.
\]

Denote set of all endomorphisms of $\mA=(A,+)$ by $\End(\mA)$. We equip $\End(\mA)$ with addition and multiplication as follows. For all endomorphisms $\alpha,\beta\colon A\to A$, their sum $\alpha+\beta$ is defined pointwise by letting for each $a\in A$,
\[
a(\alpha+\beta):=a\alpha+a\beta.
\]
The product $\alpha\beta$ is just the composition of $\alpha$ and $\beta$ as maps; that is, for each $a\in A$,
\[
a(\alpha\beta):=(a\alpha)\beta.
\]
It is easy to verify that $(\End(\mA),+,\cdot)$ is an \ais{} called the \emph{endomorphism semiring} of $\mA$. It is this \ais{} that, under the assumption that the semilattice $\mA$ is finite, is the main object of this paper.

We note that the pointwise definition of addition on $\End(\mA)$ implies that the order~\eqref{eq:semilattice order} on $\End(\mA)$ is also of a pointwise nature. That is, for all endomorphisms $\alpha,\beta\colon A\to A$,
\[
\alpha\leqslant\beta\iff a\alpha\leqslant a\beta \ \text{ for all }\ a\in A.
\]

Sometimes, to lighten notation, we take the liberty of denoting the endomorphism semiring of a semilattice $\mA$ simply by $\End(\mA)$ rather than by $(\End(\mA),+,\cdot)$.

\subsection{Three six-element \ais{}s}
\label{subsec:b1a1}

The two six-element monoids
\begin{align*}
A^1_2&:=\langle e,a\mid eae=e^2=e,\ aea=a,\ a^2=0\rangle=\{1,e,a,ae,ea,0\},\\
B_2^1&:=\langle c,d\mid cdc=c,\ dcd=d,\ c^2=d^2=0\rangle=\{1,c,d,cd,dc,0\}
\end{align*}
are presented here both by generators and relations (in the class of monoids with zero) and by explicit lists of their elements. These monoids play a distinguished role in both the structure theory of semigroups and the theory of semigroup varieties. For the present paper, it is important that each of them admits addition making it an \ais{}.

The monoid $B_2^1$ (commonly called the \emph{six-element Brandt monoid}) is known to admit a unique semilattice addition $+$ over which its multiplication distributes. Figure~\ref{fig:b21} shows the Hasse diagram of the partially ordered set $(B_2^1,\leqslant)$, where $\leqslant$ is the unique upper semilattice order compatible with multiplication in $B_2^1$. We let $\mB_2^1$ stand for the \ais{} obtained by equipping the monoid $B_2^1$ with the corresponding addition.
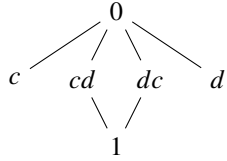
\begin{figure}[htb]
\begin{tikzpicture}
[scale=0.9]
\node at (0,-1)    (U)  {$1$};
\node at (-0.5,0) (EA)  {$cd$};
\node at (-1.5,0) (E) {$c$};
\node at (0.5,0)  (AE) {$dc$};
\node at (1.5,0)  (A)  {$d$};
\node at (0,1)    (Z)  {$0$};
\draw (A)  -- (Z);
\draw (EA) -- (Z);
\draw (AE) -- (Z);
\draw (E)  -- (Z);
\draw (EA) -- (U);
\draw (AE) -- (U);
\end{tikzpicture}
\caption{The only compatible upper semilattice order on $B_2^1$}\label{fig:b21}
\end{figure}

The six-element monoid $A_2^1$ admits two different \ais{} structures. One of the two possible upper semilattice orders compatible with its multiplication is shown in Fig.~\ref{fig:a21}.
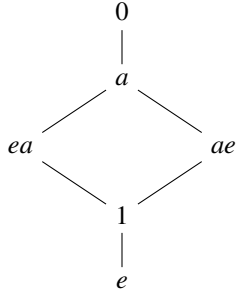
\begin{figure}[h]
\begin{tikzpicture}
[scale=0.9]
\node at (0,-1)   (E)  {$e$};
\node at (0,0)    (U)  {$1$};
\node at (-1.5,1) (EA) {$ea$};
\node at (1.5,1)  (AE) {$ae$};
\node at (0,2)    (A)  {$a$};
\node at (0,3)    (Z)  {$0$};
\draw (A)   -- (Z);
\draw (EA)  -- (A);
\draw (AE)  -- (A);
\draw (EA)  -- (U);
\draw (AE)  -- (U);
\draw (E)   -- (U);
\end{tikzpicture}
\caption{A compatible upper semilattice order on $A_2^1$}\label{fig:a21}
\end{figure}

The order in Fig.~\ref{fig:a21} comes from a well-known faithful representation of the monoid $A_2^1$ as the monoid of all endomorphisms of the chain $0<1<2$ that fix the top element of the chain. The representation is shown in Fig.~\ref{fig:A2asOPT}.
\begin{figure}[b]
\centering
\begin{tikzpicture}
[scale=0.78]
\foreach \x in {0.75,2.25,3.5,5,6.25,7.75,9,10.5,11.75,13.25,14.5,16} \foreach \y in {0.5,1.5,2.5} \filldraw (\x,\y) circle (2pt);
	\draw (0.75,0.5) edge[-latex] (2.25,0.5);
	\draw (0.75,1.5) edge[-latex] (2.25,1.5);
    \draw (0.75,2.5) edge[-latex] (2.25,2.5);
	\draw (3.5,0.5) edge[-latex] (5,1.5);
	\draw (3.5,1.5) edge[-latex] (5,1.5);
    \draw (3.5,2.5) edge[-latex] (5,2.5);
	\draw (6.25,0.5) edge[-latex] (7.75,0.5);
	\draw (6.25,1.5) edge[-latex] (7.75,2.5);
    \draw (6.25,2.5) edge[-latex] (7.75,2.5);
	\draw (9,0.5) edge[-latex] (10.5,1.5);
	\draw (9,1.5) edge[-latex] (10.5,2.5);
    \draw (9,2.5) edge[-latex] (10.5,2.5);
	\draw (11.75,0.5) edge[-latex] (13.25,0.5);
	\draw (11.75,1.5) edge[-latex] (13.25,0.5);
    \draw (11.75,2.5) edge[-latex] (13.25,2.5);
    \draw (14.5,0.5) edge[-latex] (16,2.5);
	\draw (14.5,1.5) edge[-latex] (16,2.5);
    \draw (14.5,2.5) edge[-latex] (16,2.5);
\foreach \x in {0.5,2.5,3.25,5.25,6,8,8.75,10.75,11.5,13.5,14.25,16.25} \node at (\x,0.5) {\tiny 0};
\foreach \x in {0.5,2.5,3.25,5.25,6,8,8.75,10.75,11.5,13.5,14.25,16.25} \node at (\x,1.5) {\tiny 1};
\foreach \x in {0.5,2.5,3.25,5.25,6,8,8.75,10.75,11.5,13.5,14.25,16.25} \node at (\x,2.5) {\tiny 2};
\node at (1.5,-0.15)   {$1$};
\node at (4.25,-0.15)  {$ea$};
\node at (7,-0.15)     {$ae$};
\node at (9.75,-0.15)  {$a$};
\node at (12.5,-0.15)  {$e$};
\node at (15.25,-0.15) {$0$};
\end{tikzpicture}
\caption{Representing the elements of ${A}^1_2$ by the endomorphisms of~the chain $0<1<2$ that fix 2}\label{fig:A2asOPT}
\end{figure}
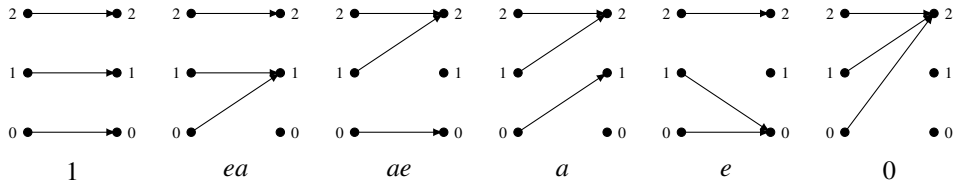
One readily sees that the order on $A_2^1$ shown in Fig.~\ref{fig:a21} corresponds to the pointwise order of endomorphisms.

We denote by $\mA_2^1$ the \ais{} obtained by equipping the monoid $A_2^1$ with addition induced by the order in Fig.~\ref{fig:a21}.

It turns out that the order on $A_2^1$ dual to that shown in Fig.~\ref{fig:a21} is also compatible with multiplication.
\begin{figure}[ht]
\begin{tikzpicture}
[scale=0.9]
\node at (0,-1)   (Z)  {$0$};
\node at (0,0)    (A)  {$a$};
\node at (-1.5,1) (EA) {$ea$};
\node at (1.5,1)  (AE) {$ae$};
\node at (0,2)    (U)  {$1$};
\node at (0,3)    (E)  {$e$};
\draw (A)   -- (Z);
\draw (EA)  -- (A);
\draw (AE)  -- (A);
\draw (EA)  -- (U);
\draw (AE)  -- (U);
\draw (E)   -- (U);
\end{tikzpicture}
\caption{Another compatible upper semilattice order on $A_2^1$}\label{fig:a21-2}
\end{figure}
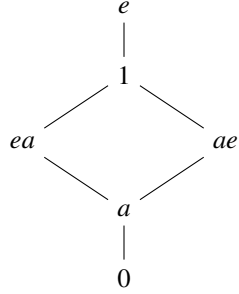
This dual order (shown in Fig.~\ref{fig:a21-2}) also comes from a faithful representation of the monoid $A_2^1$ by endomorphisms of the chain $0<1<2$ but this time the image of the representation consists of the endomorphisms  that fix the bottom element of the chain. The representation is shown in Fig.~\ref{fig:A2asOPT-2}.
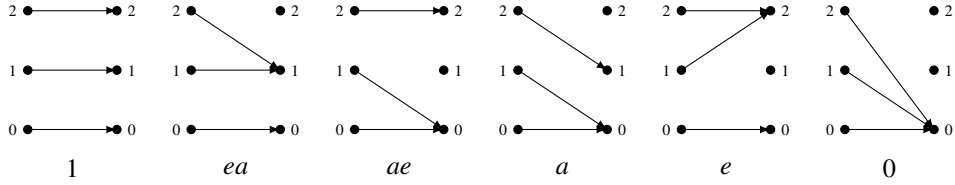
\begin{figure}[htb]
\centering
\begin{tikzpicture}
[scale=0.78]
\foreach \x in {0.75,2.25,3.5,5,6.25,7.75,9,10.5,11.75,13.25,14.5,16} \foreach \y in {0.5,1.5,2.5} \filldraw (\x,\y) circle (2pt);
	\draw (0.75,0.5) edge[-latex] (2.25,0.5);
	\draw (0.75,1.5) edge[-latex] (2.25,1.5);
    \draw (0.75,2.5) edge[-latex] (2.25,2.5);
	\draw (3.5,0.5) edge[-latex] (5,0.5);
	\draw (3.5,1.5) edge[-latex] (5,1.5);
    \draw (3.5,2.5) edge[-latex] (5,1.5);
	\draw (6.25,0.5) edge[-latex] (7.75,0.5);
	\draw (6.25,1.5) edge[-latex] (7.75,0.5);
    \draw (6.25,2.5) edge[-latex] (7.75,2.5);
	\draw (9,0.5) edge[-latex] (10.5,0.5);
	\draw (9,1.5) edge[-latex] (10.5,0.5);
    \draw (9,2.5) edge[-latex] (10.5,1.5);
	\draw (11.75,0.5) edge[-latex] (13.25,0.5);
	\draw (11.75,1.5) edge[-latex] (13.25,2.5);
    \draw (11.75,2.5) edge[-latex] (13.25,2.5);
    \draw (14.5,0.5) edge[-latex] (16,0.5);
	\draw (14.5,1.5) edge[-latex] (16,0.5);
    \draw (14.5,2.5) edge[-latex] (16,0.5);
\foreach \x in {0.5,2.5,3.25,5.25,6,8,8.75,10.75,11.5,13.5,14.25,16.25} \node at (\x,0.5) {\tiny 0};
\foreach \x in {0.5,2.5,3.25,5.25,6,8,8.75,10.75,11.5,13.5,14.25,16.25} \node at (\x,1.5) {\tiny 1};
\foreach \x in {0.5,2.5,3.25,5.25,6,8,8.75,10.75,11.5,13.5,14.25,16.25} \node at (\x,2.5) {\tiny 2};
\node at (1.5,-0.15)   {$1$};
\node at (4.25,-0.15)  {$ea$};
\node at (7,-0.15)     {$ae$};
\node at (9.75,-0.15)  {$a$};
\node at (12.5,-0.15)  {$e$};
\node at (15.25,-0.15) {$0$};
\end{tikzpicture}
\caption{Representing the elements of ${A}^1_2$ by the endomorphisms of~the chain $0<1<2$ that fix 0}\label{fig:A2asOPT-2}
\end{figure}

We denote by $\overline{\mA_2^1}$ the \ais{} obtained by equipping the monoid $A_2^1$ with addition induced by the order in Fig.~\ref{fig:a21-2}. This \ais{} is different from $\mA_2^1$, even though the two \ais{}s have isomorphic multiplicative and additive semigroups. However, the isomorphism between the multiplicative semigroups (which is simply the identity map) is incompatible with the isomorphism between the additive semigroups.

\subsection{Semiring identities and varieties}
\label{subsec:varieties} Fix a countably infinite set $X:=\{x,x_i,x_{ij},\dots\}$. The elements of $X$ are called \emph{variables}. A \emph{word} is a finite sequence of variables. (\emph{Semiring}) \emph{polynomials} are defined inductively starting from words: every word is a polynomial, and if $p_1$ and $p_2$ are polynomials, then so are the expressions $p_1+p_2$ and $p_1p_2$. As usual, when forming polynomials, multiplication takes precedence over addition.

Any map $\varphi\colon X\to S$, where $(S,+,\cdot)$ is an \ais{} is called a \emph{substitution}. The \emph{value} $p\varphi$ of a polynomial $p$ under $\varphi$ is defined inductively as follows: if $p=x_1\cdots x_k$ is a word, where $x_1,\dots,x_k$ are variables, then $p\varphi:=x_1\varphi\cdots x_k\varphi$; if $p=p_1+p_2$ or $p=p_1p_2$ for some polynomials $p_1$ and $p_2$ whose values under $\varphi$ have already been defined, then $p\varphi:=p_1\varphi+p_2\varphi$ or $p\varphi:=p_1\varphi\cdot p_2\varphi$, respectively.

A (\emph{semiring}) \emph{identity} is a formal equality between two polynomials, that is, an expression of the form $p=p'$, where $p$ and $p'$ are polynomials. An \ais{} $\mS=(S,+,\cdot)$ \emph{satisfies} $p=p'$ (or $p=p'$ \emph{holds} in $\mS$) if $p\varphi=p'\varphi$ for every substitution $\varphi\colon X\to S$. That is, each substitution of elements from $S$ for the variables occurring in $p$ or $p'$ yields equal values for the polynomials.

In view of the equivalence \eqref{eq:semilattice order}, we write identities of the form $p+p'=p'$ as $p\leqslant p'$, and we take the liberty of referring to such formal inequalities between two polynomials also as identities. For a simple concrete example, the reader can verify that the \ais{} $\overline{\mA_2^1}$ satisfies the identity $x^2\leqslant x$.

The class of all \ais{}s that satisfy all identities from a given set $\Sigma$ is called the \emph{variety defined by $\Sigma$}. The least variety containing an \ais{} $\mS$ is said to be \emph{generated by $\mS$}  and is denoted by $\var\mS$. In terms of identities, the variety $\var\mS$ is defined by all identities that hold in $\mS$. By the HSP Theorem (see \cite[Theorem II.9.5]{BuSa81}), $\var\mS$ coincides with the class of homomorphic images of subsemirings of direct powers of $\mS$. For instance, by \cite[Proposition 3]{GusVol25}, the \ais\ $\mB_2^1$ is a homomorphic image of a subsemiring of the direct square $\mA_2^1\times\mA_2^1$ whence $\mB_2^1$ belongs to the variety $\var\mA_2^1$. On the other hand, $\mB_2^1$ does not belong to the variety $\var\overline{\mA_2^1}$ since the identity $x^2\leqslant x$, which holds in $\overline{\mA_2^1}$, is easily seen to fail in $\mB_2^1$.

\section{The finite basis problem for \ais{}s: three methods}
\label{sec:methods}

An \ais{} variety is called \emph{finitely based} if it can be defined by a finite set of identities; otherwise, it is \emph{nonfinitely based}. An \ais{} $\mS$ is said to be \emph{finitely based} or \emph{nonfinitely based} according to whether the variety $\var\mS$ is finitely based or not. The finite basis problem for a class of \ais{}s is the issue of determining which \ais{}s in this class are finitely based and which are not.

For several classes of \ais{}s, the finite basis problem was studied in the first decade of the 2000s; see \cite{AEI03,AM11} and a series of papers by the first-named author~\cite{Dolinka07,Dolinka09a,Dolinka09b,Dolinka09c} for the main results of that period. A new activity phase has emerged since the beginning of the 2020s when new powerful approaches have been developed; see \cite{DolinkaGusVol24,Vol21,JackRenZhao22,GusVol23a,GusVol23b,RJZL23,WuRenZhao,WuZhaoRen23}.

As mentioned in the introduction, our solution to the finite basis problem for the endomorphism semirings of finite semilattices combines three methods that were developed to address this problem for various species of \ais{}s. These methods are discussed in Sections~\ref{subsec:infb}--\ref{subsec:kadourek}, but only to the extent necessary for their applications in our proofs.

\subsection{Inherently \nfb{} \ais{}s}
\label{subsec:infb}

An \ais{} variety is said to be \emph{locally finite} if each of its finitely generated members is finite. A locally finite variety is called \emph{inherently \nfb} if it is not contained in any finitely based locally finite variety. An \ais{} $\mS$ is \emph{inherently \nfb} whenever the variety $\var\mS$ it generates is inherently \nfb.

A variety generated by a finite \ais{} is called \emph{finitely generated}. Every finitely generated variety is locally finite (see \cite[Theorem II.10.16]{BuSa81}). Hence, a finite \ais{} $\mS$ is \nfb{} whenever the variety $\var\mS$ contains an inherently \nfb\ \ais. This yields an easily applicable tool for establishing nonfinite basability---provided an ``initial supply'' of inherently \nfb\ \ais{}s is available.

The first examples of inherently \nfb\ \ais{}s were found by the first-named author~\cite{Dolinka09a,Dolinka09b}, based on a sufficient condition for inherent nonfinite basability~\cite[Theorem B]{Dolinka09a}. In~\cite{Dolinka09a}, semirings were assumed to possess a neutral element for addition and were treated  as algebras with \emph{three} operations---binary addition and multiplication, and a nullary operation that returns this neutral element. In \cite{Dolinka09b}, it was additionally assumed that the neutral element for addition is also absorbing for multiplication. While these assumptions were essential for some results in~\cite{Dolinka09a,Dolinka09b}, a step-by-step analysis of the proof of \cite[Theorem B]{Dolinka09a} shows that the same proof remains valid in the present paper's setting, that is, for semirings whose operations are addition and multiplication only. Hence, the same sufficient condition of inherent nonfinite basability can be applied to such semirings.

We now introduce a few definitions in order to state the result we will require.

The sequence $\{Z_m\}_{m=1,2,\dots}$ of \emph{Zimin words} is defined inductively by
\[
Z_1:=x_1,\quad Z_{m+1}:=Z_mx_{m+1}Z_m,
\]
where $x_1,x_2,\dots,x_m,\dots$ are distinct variables. A word ${w}$ is \emph{minimal} [respectively, \emph{maximal}] for an \ais{} $\mS$ if the only word ${w'}$ such that $\mS$ satisfies the identity ${w'}\leqslant{w}$ [respectively, ${w}\leqslant{w'}$] is the word ${w}$ itself. A word $w$ is \emph{isolated} for $\mS$ if it is both minimal and maximal for $\mS$.

\begin{proposition}[{\!\cite[Theorem B]{Dolinka09a}}]
\label{prop:INFB}
If an \ais{} $\mS$ generates a locally finite variety and all Zimin words are isolated for $\mS$, then $\mS$ is inherently \nfb.
\end{proposition}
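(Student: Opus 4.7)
The argument is by contradiction. Suppose $\mS$ lies in some locally finite, finitely based \ais{} variety $\mathcal{V}$, and let $\Sigma$ be a finite identity basis for $\mathcal{V}$. Using distributivity and additive idempotence, every polynomial is equivalent (in any \ais{}) to a finite sum of words, so each identity $p=q$ in $\Sigma$ may be assumed to have both sides in sum-of-words normal form. Splitting each such identity into the two inequalities $p \leqslant q$ and $q \leqslant p$, and recalling that in a semilattice $\sum_i u_i \leqslant t$ iff $u_i \leqslant t$ for every $i$, one may further reduce $\Sigma$ to a finite family of identities of the form $w \leqslant v_1+\cdots+v_s$ with $w, v_1, \ldots, v_s$ words. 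Let $n$ bound the length of every word appearing in this normalized $\Sigma$.

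The first step is to transfer the isolation hypothesis from $\mS$ to $\mathcal{V}$: since $\var\mS \subseteq \mathcal{V}$, every identity valid in $\mathcal{V}$ is also valid in $\mS$. Hence if $\mathcal{V}\models Z_m = p$ and one writes $p$ in sum-of-words form as $p_1+\cdots+p_r$, then $\mS \models p_i \leqslant Z_m$ for each $i$, and minimality of $Z_m$ for $\mS$ forces each $p_i$ to be the single word $Z_m$. Consequently $p$ is a sum of copies of $Z_m$, and is therefore equal to $Z_m$ modulo additive idempotence. In other words, the equivalence class of $Z_m$ in the relatively free \ais{} $F_{\mathcal{V}}(x_1,\ldots,x_m)$ is forced to be as small as possible: every polynomial in it reduces to the single word $Z_m$ after sum-of-words normalization. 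A dual observation using maximality of $Z_m$ handles inequalities of the opposite form.

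The second step is the crux: use local finiteness of $\mathcal{V}$ together with Zimin-word combinatorics to exhibit, for $m$ chosen sufficiently large compared with $n$, an identity $\mathcal{V}\models Z_m = p$ whose normalization of $p$ contains a word summand different from $Z_m$, directly contradicting step one. Since $F_{\mathcal{V}}(x_1,\ldots,x_m)$ is finite, infinitely many distinct polynomials in the variables $x_1,\ldots,x_m$ must share a single equivalence class, so the pigeonhole principle forces non-trivial collapses. The classical unavoidability property of Zimin words, which guarantees that sufficiently long words inevitably contain $Z_m$ as a scattered pattern in a controlled fashion, is what one uses to locate such a collapse at $Z_m$ itself: an analysis of how the length-$n$-bounded identities of $\Sigma$ can rewrite the long word $Z_m$ shows that no single step can dismantle the Zimin scaffolding, so $Z_m$-equivalent polynomials remain ``detectable'' throughout the deduction chain, and the finiteness of $F_{\mathcal{V}}(x_1,\ldots,x_m)$ must eventually collide at $Z_m$ itself with some genuinely distinct word.

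The main obstacle is clearly the second step: rigorously extracting, from abstract local finiteness together with length-$n$ identities, a concrete witness identity $Z_m = p$ in which $p$ carries an extraneous word summand after normalization. This is the combinatorial core of Sapir's method for inherent nonfinite basability, here adapted to the setting where polynomials are sums of words rather than single words; the bookkeeping needed to guarantee that the extra summand survives sum-of-words simplification (rather than collapsing back to $Z_m$ via additive idempotence) is the delicate point that the proof must address.
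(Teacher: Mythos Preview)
The paper does not prove this proposition; it is quoted from \cite[Theorem~B]{Dolinka09a} and used as a black box, with only a remark that the original proof carries over when the additive neutral element is dropped from the signature. There is therefore no in-paper argument to compare your attempt against.

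On the substance of your sketch: the normalization of $\Sigma$ to inequalities $w\leqslant v_1+\cdots+v_s$ is correct, and your transfer of minimality of $Z_m$ from $\mS$ to $\mathcal{V}$ (forcing every word summand of a polynomial equal to $Z_m$ to be $Z_m$ itself) is sound. But what you label ``the second step'' is the entire content of the theorem, and you do not prove it. You invoke ``Sapir's method,'' ``unavoidability of Zimin words,'' and pigeonhole, and you assert that ``no single step can dismantle the Zimin scaffolding''---yet none of this is made precise. A genuine proof must name a concrete invariant of polynomials that (i) holds for the single word $Z_m$, (ii) is preserved under every one-step deduction from an identity in $\Sigma$ once $m$ is large relative to your bound $n$, and (iii) is violated by some polynomial that local finiteness forces to be equal to $Z_m$ in $F_{\mathcal{V}}(x_1,\dots,x_m)$. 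You supply neither the invariant nor the preservation argument; your final paragraph explicitly concedes that the ``delicate point'' is left open. As written this is an outline of where a proof should go, not a proof.
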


\subsection{Strongly \nfb{} \ais{}s}
\label{subsec:snfb}

A finite \ais{} is said to be \emph{strongly \nfb} if it does not belong to any \ais{} variety that is both finitely generated and finitely based. Clearly, every strongly \nfb{} \ais{} is \nfb.

Since every finitely generated variety is locally finite, every inherently \nfb{} finite \ais{} is strongly \nfb; however, there exist strongly \nfb{} \ais{}s that are not inherently \nfb; see~\cite[Example 6.7]{JackRenZhao22}. Thus, strong nonfinite basability offers a tool for establishing the nonfinite basability of finite \ais{}s whose scope exceeds that of inherent nonfinite basability.

We will use the following sufficient condition for strong nonfinite basability.

\begin{proposition}[{\!\cite[Theorem 6.1]{JackRenZhao22}}]
\label{prop:SNFB}
Let $\mS=(S,+,\cdot)$ be a finite \ais. If the semigroup $(S,\cdot)$ contains a nonabelian nilpotent subgroup, then $\mS$ is strongly \nfb.
\end{proposition}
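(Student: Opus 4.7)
My approach would be by contradiction. Suppose $\mS$ lies in a variety $\mathcal V$ that is both finitely generated---say $\mathcal V=\var\mT$ for some finite \ais\ $\mT$---and finitely based. Let $G\leqslant (S,\cdot)$ be the hypothesised nonabelian nilpotent subgroup, of nilpotency class $c\geqslant 2$. The plan is to produce a single \ais{} identity that holds in $\mT$ (whence throughout $\mathcal V$, and so in $\mS$) yet fails in $\mS$, yielding the desired contradiction.

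The source of this identity would be the following classical obstruction, traceable to Sapir's work on the finite basis problem for finite semigroups: the multiplicative reduct $(T,\cdot)$, being finite, has all of its subgroups uniformly bounded in both exponent and nilpotency class, whereas the multiplicative reduct $(S,\cdot)$ contains the nonabelian nilpotent group $G$ that---possibly after passing from $G$ to an iterated power of $G$ sitting inside $\mS^n$ for some large $n$---can be made to violate any such uniform bound. Concretely, I would work with a family of iterated commutator words $[x_1,x_2,\dots,x_n]$, realising each of them as a \emph{semigroup} word by replacing every group-theoretic inverse $x_i^{-1}$ with a suitable positive power $x_i^{k}$ (legitimate inside any finite semigroup). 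For $n$ large enough relative to the class bound for subgroups of $(T,\cdot)$, the resulting commutator word would evaluate trivially in every maximal subgroup of $(T,\cdot)$---hence give a semigroup identity satisfied by $(T,\cdot)$, by a standard reduction from identities of a finite semigroup to identities of its regular $\mJ$-classes---yet fail in $G$.

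The last step is to promote this semigroup identity to an \ais\ identity: the failing semigroup word already fails in $(S,\cdot)$, so the multiplicative identity alone already fails in $\mS$, and no additive wrapping is strictly required. However, the subtler point is to ensure that this semigroup identity, when read in the richer \ais\ language, is indeed an identity of $\mT$: identities of $\mT$ must hold under \emph{all} substitutions, and evaluations of variables may fall outside the maximal subgroups of $(T,\cdot)$. To handle this I would either ``premultiply'' by an appropriate idempotent polynomial $e(x_1,\dots,x_n)$ of $\mT$ that forces the substituted elements into the union of maximal subgroups, or, more carefully, use a two-sided product of such idempotents so that both sides of the candidate identity lie in the same group $\mH$-class of $\mT$ and hence coincide by the choice of $n$.

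The main obstacle, and the technical heart of the proof, is precisely this promotion step: one must design the additive/multiplicative envelope so that (a) on $\mT$ the wrapping genuinely restricts evaluations into group $\mH$-classes of uniformly bounded nilpotency class, so that the commutator identity becomes enforceable everywhere, and yet (b) on $\mS$ this same envelope does not absorb the commutator failure contributed by $G$. Since the idempotency of $+$ readily collapses distinctions in the \ais\ order, the envelope must be chosen so that the relevant two sides are compared \emph{inside} $G$ (where $+$ acts as equality, because distinct elements of $G$ are incomparable in the semilattice order). Making this picture coexist with the combinatorial group-theoretic bookkeeping supplied by Sapir-style commutator arguments is where the real effort would lie.
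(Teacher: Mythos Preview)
The paper does not prove this proposition at all; it is quoted from \cite[Theorem~6.1]{JackRenZhao22} and used as a black box in the proof of Claim~\ref{claim:snfb}. There is thus no in-paper argument to compare your proposal against.

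That said, your sketch contains a genuine gap. Observe that your argument never invokes the hypothesis that $\var\mT$ is \emph{finitely based}; you use only that $\mT$ is finite. If your reasoning were sound it would show that $\mS$ lies in no finitely generated \ais{} variety whatsoever, which is absurd since $\mS\in\var\mS$ and $\mS$ is finite. The specific mechanism you propose cannot work: once you assume $\mS\in\var\mT$, the multiplicative reduct $(S,\cdot)$ lies in the semigroup variety generated by $(T,\cdot)$, so \emph{every} semigroup identity of $(T,\cdot)$---in particular, any commutator-type identity forced by the nilpotency bound on subgroups of $(T,\cdot)$---already holds throughout $(S,\cdot)$ and hence in $G$. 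Your escape route, ``passing from $G$ to an iterated power of $G$ sitting inside $\mS^n$'', does not help either: a direct power of a nilpotent group of class $c$ is again nilpotent of class exactly $c$, so no amount of iteration raises the class beyond that of $G$ itself. A correct proof must make essential use of the finite-basis hypothesis and of the interaction between $+$ and~$\cdot$; your outline engages neither.
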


\subsection{AI-semirings whose varieties contain $\mB_2^1$}
\label{subsec:kadourek}

The six-element \ais{} $\mB_2^1$ introduced in Section~\ref{subsec:b1a1} was shown to be \nfb{} in \cite{JackRenZhao22,Vol21}. It remains unknown whether $\mB_2^1$ possesses either of the more powerful properties discussed in Sections~\ref{subsec:infb} and~\ref{subsec:snfb}, i.e., inherent or strong nonfinite basability. However, the nonfinite basability of this \ais{} has proved to be quite contagious: if the variety generated by an \ais{} $\mS$ contains $\mB_2^1$, then, under rather mild conditions, $\mS$ is also \nfb. See \cite{GusVol23a,GusVol23b,GusVol25} for various instances of such conditions; in the present paper, we adopt the one from \cite{GusVol23b}. This condition is formulated in terms of certain identities, which we present for the sake of completeness, even though their precise form plays absolutely no role in our arguments.

For any $k,h\ge1$, consider the set of variables
\[
X_k^{(h)}:=\{x_{i_1i_2\cdots i_h}\mid 1\le i_1,i_2,\dots,i_h\le k\}.
\]
For arbitrarily fixed $n,m\ge1$, we introduce a family of words $v_{n,m}^{(h)}$ over $X_{2n}^{(h)}$, $h=1,2,\dotsc$, by induction on $h$.

For $h=1$, let
\[
v_{n,m}^{(1)}:=x_1x_2\cdots x_{2n}\,\Bigl(x_nx_{n-1}\cdots x_1\cdot x_{n+1}x_{n+2}\cdots x_{2n}\Bigr)^{2m-1}.
\]
Now assume $h>1$ and that the word $v_{n,m}^{(h-1)}$ over $X_{2n}^{(h-1)}$ has already been defined. For each $j\in\{1,2,\dots,2n\}$, let $v_{n,m,j}^{(h-1)}$ be the word over $X_{2n}^{(h)}$ obtained from  $v_{n,m}^{(h-1)}$ by appending $j$ to the indices of the variables in $X_{2n}^{(h-1)}$; that is, each  variable $x_{i_1i_2\dots i_{h-1}}$ is replaced by $x_{i_1i_2\dots i_{h-1}j}$ for all $i_1,i_2,\dots,i_{h-1}\in\{1,2,\dots,2n\}$. Then let
\[
v_{n,m}^{(h)}:=v_{n,m,1}^{(h-1)}\cdots v_{n,m,2n}^{(h-1)}\Bigl(v_{n,m,n}^{(h-1)}\cdots v_{n,m,1}^{(h-1)}\cdot v_{n,m,n+1}^{(h-1)}\cdots v_{n,m,2n}^{(h-1)}\Bigr)^{2m-1}.
\]

\begin{proposition}[{\!\cite[Theorem 4.2]{GusVol23b}}]
\label{prop:B21}
If an \ais{} $\mS$ satisfies the identities $v_{n,m}^{(h)}=(v_{n,m}^{(h)})^2$ for all $n\ge2$ and some $m,h\ge 1$ and the variety $\var\mS$ contains the \ais{} $\mB_2^1$, then $\mS$ is \nfb.
\end{proposition}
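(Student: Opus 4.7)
The plan is to argue by contradiction. Suppose $\mS$ were finitely based, so that the variety $\var\mS$ is defined by some finite set $\Sigma$ of identities, and in particular there is a bound $k$ on the number of variables appearing in any identity of $\Sigma$. My goal is to produce, for each sufficiently large $n$ (say $n>k$), an \ais{} $\mS_n\in\var\mS$ in which $v_{n,m}^{(h)}=(v_{n,m}^{(h)})^2$ fails; since this identity is assumed to hold in $\mS$, this would give the desired contradiction.

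The natural source of $\mS_n$ is $\mB_2^1$ itself, which by hypothesis lies in $\var\mS$. By the HSP Theorem, any homomorphic image of a subsemiring of a direct power of $\mB_2^1$ automatically lies in $\var\mB_2^1\subseteq\var\mS$, and this is the family from which I would draw $\mS_n$. The inductive structure of $v_{n,m}^{(h)}$ suggests a coordinate-wise construction: on each coordinate, variables $x_{i_1\cdots i_h}$ are sent to either $c$ or $d$ according to a carefully designed pattern exploiting the defining relations $cdc=c$, $dcd=d$, $c^2=d^2=0$ of the Brandt monoid $B_2^1$. The block $x_1x_2\cdots x_{2n}$ followed by many repetitions of the ``reversed-then-forward'' block $x_n\cdots x_1\cdot x_{n+1}\cdots x_{2n}$ is exactly the sort of formal word designed so that, under a clever coordinate-wise substitution into $\mB_2^1$, all the long partial products trace out strictly alternating sequences $cdcd\cdots$ and thus remain nonzero, whereas squaring the word forces two like letters to meet and annihilates the value to $0$. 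This would separate $v_{n,m}^{(h)}$ from $(v_{n,m}^{(h)})^2$ in $\mS_n$.

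To complete the contradiction I would have to verify, for the chosen $n>k$, that every identity in at most $k$ variables holding in $\mS$ still holds in $\mS_n$. The strategy is to show that any substitution of at most $k$ variables into an identity of $\Sigma$ only sees a ``bounded window'' of the construction, and on such windows the behavior agrees with evaluations already realizable inside $\mB_2^1$ (or a bounded power of it), which lies in $\var\mS$. For $h=1$ this should be a direct combinatorial check on the Kadourek-style alternation pattern; for $h>1$ one iterates the construction using the recursive definition of $v_{n,m}^{(h)}$ in terms of $v_{n,m}^{(h-1)}$, so that the $h$-fold iterate has the analogous separation property with respect to its coordinate-indexed blocks.

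The hard part will be organizing the bookkeeping in the ``bounded window'' argument: one has to show simultaneously that (i) every short substitution into $v_{n,m}^{(h)}$ can be modeled faithfully inside an evaluation world small enough to be governed by identities of $\mS$, and (ii) the full substitution of length proportional to $n$ still uses the global alternating geometry of $B_2^1$ to keep $v_{n,m}^{(h)}$ nonzero while forcing $(v_{n,m}^{(h)})^2=0$. Balancing these two requirements—making $\mS_n$ large enough to exhibit the failure, yet uniform enough to satisfy all finite-variable identities valid in $\mS$—is the combinatorial crux of the proof, and it is precisely where the recursive Kadourek construction of $v_{n,m}^{(h)}$ earns its complexity.
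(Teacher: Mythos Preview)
The paper does not prove this proposition at all; it is quoted as \cite[Theorem~4.2]{GusVol23b} and used as a black box. So there is no in-paper argument to compare against.

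That said, your outline contains a genuine logical error that makes the strategy collapse. You propose to draw the witnesses $\mS_n$ from the HSP-closure of $\mB_2^1$, noting that any such $\mS_n$ then lies in $\var\mB_2^1\subseteq\var\mS$. But membership in $\var\mS$ means that $\mS_n$ satisfies \emph{every} identity of $\mS$, not merely the $k$-variable ones from $\Sigma$; in particular it already satisfies $v_{n,m}^{(h)}=(v_{n,m}^{(h)})^2$, the very identity you want to fail. The two requirements you place on $\mS_n$---lie in $\var\mS$, yet violate an identity of $\mS$---are flatly inconsistent, so no contradiction is produced. Your subsequent paragraph (``verify that every identity in at most $k$ variables holding in $\mS$ still holds in $\mS_n$'') would be vacuous under your construction, since \emph{all} identities of $\mS$ hold there automatically.

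The route actually taken in \cite{GusVol23b} builds $\mS_n$ \emph{outside} $\var\mB_2^1$, from auxiliary combinatorial data (semirings attached to hypergraphs), engineered so that (i) every identity in few variables holding in $\mB_2^1$ transfers to $\mS_n$, while (ii) the long identity $v_{n,m}^{(h)}=(v_{n,m}^{(h)})^2$ fails. Condition~(i) is not automatic and is exactly the hard part; your ``bounded window'' intuition points in the right direction, but the window must be realized inside a genuinely new semiring, not inside a power of $\mB_2^1$. The role of $\mB_2^1\in\var\mS$ is only to guarantee that identities of $\mS$ in few variables are already identities of $\mB_2^1$, so that establishing~(i) for $\mB_2^1$ suffices.
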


Two particular \ais{}s to which this result was applied in \cite{GusVol23b} play a role in the present paper. A \emph{rook} matrix is a zero-one square matrix with at most one entry equal to 1 in each row and each column. (The name refers to the fact that $8\times 8$ rook matrices encode placements of nonattacking rooks on a chessboard.) Denote by $R_t$ the set of all $t\times t$ rook matrices.  The \emph{rook semiring} $\mathcal{R}_t:=(R_t,+,\cdot)$ arises by defining $+$ as the entrywise (Hadamard) product of matrices:
\[
a+b:=(a_{ij}b_{ij})_{t\times t}\ \text{ for all }\ a = (a_{ij})_{t \times t}\ \text{ and }\ b = (b_{ij})_{t \times t}\ \text{ in }\ R_t,
\]
while $\cdot$ denotes the usual matrix multiplication.

\begin{proposition}[{\!\cite[Proposition 2.5]{GusVol23b}}]
\label{prop:rook2and3}
For any $n\ge 2$, the rook semirings $\mathcal{R}_2$ and $\mathcal{R}_3$ satisfy the identities $v_{n,2}^{(2)}=(v_{n,2}^{(2)})^2$ and $v_{n,6}^{(4)}=(v_{n,6}^{(4)})^2$, respectively.
\end{proposition}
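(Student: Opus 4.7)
The plan is to identify the multiplicative monoid of $\mathcal{R}_t$ with the symmetric inverse monoid $I_t$ of partial bijections on $\{1,\dots,t\}$, under which the Hadamard sum corresponds to the intersection of partial maps viewed as sets of ordered pairs. In this picture $I_t$ is an inverse semigroup whose idempotents are exactly the partial identities, so the identity $v_{n,m}^{(h)}=(v_{n,m}^{(h)})^2$ is equivalent to the statement that for every substitution $\varphi\colon X_{2n}^{(h)}\to \mathcal{R}_t$, the partial bijection $v_{n,m}^{(h)}\varphi$ restricts to the identity on its domain.

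The proof proceeds by induction on $h$, exploiting the recursive shape of the words. Writing $\sigma_j := v_{n,m,j}^{(h-1)}\varphi$ and unpacking one level of the recursion gives
\[
v_{n,m}^{(h)}\varphi \;=\; \sigma_1 \sigma_2 \cdots \sigma_{2n}\bigl(\sigma_n\sigma_{n-1}\cdots\sigma_1\cdot\sigma_{n+1}\sigma_{n+2}\cdots\sigma_{2n}\bigr)^{2m-1}.
\]
The idea is to introduce a descending chain of properties $P_0\supseteq P_1\supseteq\cdots\supseteq P_H = \{\text{partial identities of }I_t\}$, where $P_0$ is just ``element of $I_t$'' and each successive $P_k$ captures a progressively stronger way of being near a partial identity (bounds on the length of surviving cycles, or on the orders of the relevant maximal subgroups inside the $\mathcal{H}$-classes). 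The inductive step asserts that if each block $\sigma_j$ satisfies $P_{h-1}$, then the displayed product satisfies $P_h$, and the required depth $H$ of the chain is precisely the value of $h$ appearing in the statement.

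The arithmetic input that pins down $m$ is the elementary fact that in $I_t$ every element $\sigma$ satisfies $\sigma^{\mathrm{lcm}(1,\dots,t)}=\bigl(\sigma^{\mathrm{lcm}(1,\dots,t)}\bigr)^2$, because a sufficiently high iteration retracts $\sigma$ to the permutation induced on the union of its cycles, and this permutation has order dividing $\mathrm{lcm}(1,\dots,t)$. This forces $m=2$ for $t=2$ and $m=6$ for $t=3$. Since $2m-1$ is odd, $\sigma^{2m-1}$ itself need not be idempotent; it is the flanking products $\sigma_1\cdots\sigma_{2n}$ and $\sigma_n\cdots\sigma_1\sigma_{n+1}\cdots\sigma_{2n}$ that must absorb the residual mismatch. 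At the top of the hierarchy, all factors in the displayed product are partial identities, and their product is again a partial identity since the idempotents of an inverse semigroup form a commutative subsemigroup, closing the induction.

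The principal obstacle is pinpointing the correct chain $(P_k)$ and verifying the inductive step, which quickly becomes combinatorially intricate. For $\mathcal{R}_2$ the cycle structure is simple enough --- only $2$-cycles to eliminate --- that depth $H=2$ suffices. For $\mathcal{R}_3$ the presence of $3$-cycles and the richer maximal subgroups inside $I_3$ double the required depth to $H=4$, and it is here that the asymmetry of the level-1 template (in particular the reversed prefix $\sigma_n\cdots\sigma_1$) must be shown to do essential work: it pairs each $\sigma_j$ with an effective ``local inverse'' so that $3$-cycles are broken within the available number of nesting layers. Carrying this bookkeeping through without losing structural information is the delicate part of the argument.
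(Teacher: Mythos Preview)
The paper does not prove this proposition at all: it is quoted verbatim from \cite[Proposition~2.5]{GusVol23b} and used as a black box in the proof of Claim~\ref{claim:onlyif}. So there is no ``paper's own proof'' to compare against here.

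That said, what you have written is not a proof either; it is a programme. The identification of $(\mathcal{R}_t,+,\cdot)$ with the symmetric inverse monoid $I_t$ equipped with intersection as addition is correct and is indeed the right setting, and the observation that $v_{n,m}^{(h)}=(v_{n,m}^{(h)})^2$ amounts to showing that every evaluation of $v_{n,m}^{(h)}$ lands in the idempotent part of $I_t$ is fine. But the heart of the matter is precisely the part you flag as ``the principal obstacle'': defining the chain $P_0\supseteq P_1\supseteq\cdots\supseteq P_H$ and proving the inductive step. You never actually specify what $P_k$ is, and your remark that ``the asymmetry of the level-1 template \dots\ must be shown to do essential work'' is a statement of hope, not an argument. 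In particular, the claim that the flanking products ``absorb the residual mismatch'' left by the odd exponent $2m-1$ is exactly the non-obvious combinatorial fact that needs proving, and nothing in your outline explains why it holds or how the parameters $(m,h)=(2,2)$ and $(6,4)$ are forced rather than merely sufficient.

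If you want to give a self-contained proof, you will need to actually carry out the induction in \cite{GusVol23b}; the outline you have is compatible with that argument in spirit, but as written it contains no verifiable step beyond the translation into $I_t$.
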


\section{Main result and its proof}
\label{sec:end}

Our main result classifies the endomorphism semirings of finite semilattices with respect to the finite basis problem. Its formulation involves two parameters of a finite semilattice $(A,+)$, namely, \emph{size} (the number of elements in $A$) and \emph{height} (the maximal size of chains in the partially ordered set $(A,\leqslant)$, where the order $\leqslant$ is defined via \eqref{eq:semilattice order}).

\begin{theorem}
\label{thm:end}
The endomorphism semiring $\End(\mA)$ of a finite semilattice $\mA$ is \fb{} if and only if the size of $\mA$ is 1 or 2.

Moreover, the semiring $\End(\mA)$ is inherently \nfb{} if the height of $\mA$ is at least 3, and is strongly \nfb{} if the size of $\mA$ is at least 5.
\end{theorem}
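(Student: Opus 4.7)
The plan is to dispatch the ``if'' direction by direct inspection of small cases (when $|A|=1$ the semiring $\End(\mA)$ is trivial, and when $|A|=2$ it has only three elements---the identity and the two constant maps---so a finite identity base is readily exhibited) and to prove the ``only if'' direction by a three-way case analysis on $\mA$ with $|A|\ge 3$ that mirrors the ``moreover'' clause and invokes each of the three methods of Section~\ref{sec:methods} in a complementary manner.

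If the height of $\mA$ is at least $3$, I would fix a three-element chain $a_0<a_1<a_2$ of $\mA$ and use it to construct, inside $\End(\mA)$, subsemirings isomorphic to both $\mA_2^1$ and $\overline{\mA_2^1}$ via the top-fixing and bottom-fixing chain representations of Figures~\ref{fig:A2asOPT} and~\ref{fig:A2asOPT-2} (transferred from the chain to all of $\mA$ by combining the chain endomorphisms with suitable ``collapse-to-the-chain'' maps on the remaining elements). Local finiteness of $\var\End(\mA)$ is automatic since $\End(\mA)$ is finite, so Proposition~\ref{prop:INFB} reduces the task to showing that every Zimin word $Z_m$ is isolated for $\End(\mA)$: the idea is to exploit the opposite chain representations to design, for any word $w$ with $Z_m\leqslant w$ or $w\leqslant Z_m$, a pair of substitutions whose values force $w$ to share the nested Zimin skeleton of $Z_m$ and thereby coincide with it syntactically.

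If instead $|A|\ge 5$ but the height of $\mA$ is only $2$, then $\mA$ is a fan: a top $T$ together with $k\ge 4$ pairwise incomparable atoms. Every permutation of atoms extended by $T\mapsto T$ is an endomorphism (any two distinct atoms sum to $T$, which is fixed), so the multiplicative semigroup of $\End(\mA)$ contains a copy of the symmetric group $S_k\supseteq S_4$. Since $S_4$ has the dihedral group $D_4$ of order $8$ as a Sylow $2$-subgroup, and $D_4$ is a nonabelian nilpotent group, Proposition~\ref{prop:SNFB} immediately yields that $\End(\mA)$ is strongly \nfb. The residual cases $|A|\in\{3,4\}$ with height~$2$---small fans with two or three atoms---I would handle via Proposition~\ref{prop:B21}: first, exhibit $\mB_2^1$ as a homomorphic image of a subsemiring of $\End(\mA)$ using endomorphisms that transpose a pair of atoms (mimicking the generators $c,d$ of $B_2^1$); second, identify $\End(\mA)$ with a subsemiring of the rook semiring $\mR_2$ or $\mR_3$ by encoding endomorphisms by their action on atoms, and invoke Proposition~\ref{prop:rook2and3} to derive the required identities $v_{n,m}^{(h)}=(v_{n,m}^{(h)})^2$.

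The main obstacle is the height-$\ge 3$ case, specifically the verification that every Zimin word is isolated in $\End(\mA)$: one must simultaneously preclude both $Z_m\leqslant w$ and $w\leqslant Z_m$ whenever $w\ne Z_m$, and a single substitution typically cannot do this. The asymmetry between the top-fixing and bottom-fixing chain representations (i.e., between $\mA_2^1$ and $\overline{\mA_2^1}$) is the critical ingredient that makes the argument go through. The other two cases, by contrast, reduce cleanly to the machinery of Sections~\ref{subsec:snfb} and~\ref{subsec:kadourek} once the correct subsemirings, permutation embeddings, and rook identifications are in hand.
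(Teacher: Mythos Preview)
Your overall decomposition and the assignment of methods to cases match the paper exactly, and the height-$\ge 3$ and size-$\ge 5$ cases are essentially right (the paper streamlines the former by first embedding all of $\End(\mC_3)$ into $\End(\mA)$ via a projection $\pi\colon\mA\twoheadrightarrow\mC_3$---which requires taking $a_2$ to be the top of $\mA$---and then proving $\End(\mC_3)$ inherently \nfb{} once and for all; for minimality of Zimin words it uses $\mB_2^1\in\var\mA_2^1$ rather than $\mA_2^1$ directly, which makes the substitution argument cleaner).

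There is, however, a genuine error in your plan for the small fans $\mP_2,\mP_3$. You propose to ``identify $\End(\mA)$ with a subsemiring of the rook semiring $\mR_2$ or $\mR_3$,'' but the inclusion goes the other way: $\mR_t\cong\End^0(\mP_t)\subsetneq\End(\mP_t)$, and the extra elements of $\End(\mP_t)$ are the $t$ nonzero constant maps. These constants are noncommuting idempotents ($c_ic_j=c_j$), whereas all idempotents in any rook semiring $\mR_s$ are partial identity matrices and hence commute; so $\End(\mP_t)$ embeds into no $\mR_s$ at all. Consequently you cannot pull the identities $v_{n,m}^{(h)}=(v_{n,m}^{(h)})^2$ back from $\mR_t$ by a subsemiring argument. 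The paper repairs this with a direct check: for any word $w$, if $w=w^2$ holds in $\mR_t$ then it holds in $\End(\mP_t)$, because a substitution that hits a constant map $c$ makes $w\varphi$ factor as (something)$\cdot c\cdot w''\varphi=c\cdot w''\varphi$, and the same computation gives $w^2\varphi=c\cdot w''\varphi$. With that lemma in hand, Proposition~\ref{prop:rook2and3} and Proposition~\ref{prop:B21} apply as you intended; the containment $\mB_2^1\in\var\End(\mP_t)$ is obtained simply because $\mB_2^1$ is already a \emph{subsemiring} of $\mR_2$ (the six rook matrices without the transposition), not merely a homomorphic image.
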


The statement of Theorem~\ref{thm:end} consists of four claims, which we now isolate and then prove one by one.

\begin{claim}
\label{claim:if}
The endomorphism semiring of a semilattice of size 1 or 2 is \fb.
\end{claim}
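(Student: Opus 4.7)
The plan is to handle the two sub-cases $|A|=1$ and $|A|=2$ separately.

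For $|A|=1$, the only endomorphism of $\mA$ is the identity, so $\End(\mA)$ is a one-element \ais{}. Its variety is the trivial variety, which is defined by the single identity $x=y$; hence the conclusion is immediate.

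For $|A|=2$, write $\mA$ as the two-element chain $\{0<1\}$. I would first describe $\End(\mA)$ explicitly. The three order-preserving self-maps of $\mA$ are the two constant maps $\bar 0$ and $\bar 1$ and the identity map $\iota$. Pointwise addition places these in the chain $\bar 0<\iota<\bar 1$, while composition makes $\iota$ a two-sided multiplicative identity and turns both $\bar 0$ and $\bar 1$ into right zeros, because post-composing any $\alpha$ with a constant map yields that same constant. Having this explicit three-element \ais{} in hand, I would invoke the fact that its finite basability was established by the first-named author in \cite{Dolinka09b}, where an explicit finite identity basis for this particular \ais{} is exhibited. Quoting that result closes the case.

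The only technical point that warrants any care is a potential signature mismatch with \cite{Dolinka09b}: there, \ais{}s are typically considered as algebras equipped with an extra nullary operation for the additive (and sometimes multiplicative) zero, whereas the signature of the present paper consists of binary $+$ and $\cdot$ only. I expect this mismatch to be purely bookkeeping rather than mathematical: either the explicit basis supplied in \cite{Dolinka09b} avoids the nullary symbol altogether and can be read verbatim in our signature, or any identity in that basis can be rewritten by introducing a fresh universally quantified variable $z$ constrained via the guarding condition $z+x=x$, which forces $z$ to be interpreted as the additive neutral element (here, $\bar 0$). The main (and really only) obstacle is this routine translation; the substantive equational content is already contained in \cite{Dolinka09b}.
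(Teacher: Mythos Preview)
Your treatment of the case $|A|=1$ is fine and matches the paper. The case $|A|=2$, however, has a genuine gap.

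First, the citation you lean on does not deliver what you need. As the present paper itself records (Section~\ref{subsec:infb}), the semirings in \cite{Dolinka09b} carry an extra nullary constant that is simultaneously neutral for $+$ \emph{and absorbing for} $\cdot$. In $\End(\mC_2)$ the additive neutral element $\bar 0$ is \emph{not} multiplicatively absorbing: with the paper's left-to-right composition one has $\bar 0\cdot\bar 1=\bar 1$. So $\End(\mC_2)$ is not even an algebra of the type considered in \cite{Dolinka09b}, and there is no reason to expect an explicit basis for it to appear there. (Indeed, \cite{Dolinka09b} studies $0$-preserving endomorphisms of semilattices with zero; for the two-element chain this yields a \emph{two}-element semiring, not your three-element one.)

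Second, even granting a finite basis in an enlarged signature, your proposed translation is not valid. Replacing the constant $0$ by a fresh variable $z$ subject to the premise $z+x=x$ turns identities into \emph{quasi}-identities; a finite basis of quasi-identities says nothing about finite basability by identities. Nor can you fall back on term-definability of $\bar 0$: any $(+,\cdot)$-term evaluated with every variable set to $\bar 1$ returns $\bar 1$, so no term picks out $\bar 0$ uniformly. Thus the signature issue is not ``routine bookkeeping''.

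The paper avoids all of this with a one-line structural observation you already have in hand: each of $\bar 0,\iota,\bar 1$ satisfies $\alpha^2=\alpha$, so multiplication in $\End(\mC_2)$ is idempotent, and then Pastijn's theorem \cite[Theorem 4.1]{Pastijn05}---that every \ais{} with idempotent multiplication is finitely based---finishes the job directly in the $(+,\cdot)$ signature.
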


\begin{claim}
\label{claim:infb}
The endomorphism semiring of a finite semilattice of height at least 3 is inherently \nfb.
\end{claim}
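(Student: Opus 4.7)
The plan is to reduce Claim~\ref{claim:infb} to the inherent nonfinite basability of the six-element ai-semiring $\mA_2^1$ introduced in Section~\ref{subsec:b1a1}, a property established in \cite{Dolinka09a, Dolinka09b} via Proposition~\ref{prop:INFB}. The idea is to show that $\mA_2^1$ embeds as a subsemiring of $\End(\mA)$ whenever the height of $\mA$ is at least 3. Since $\var\End(\mA)$ is a locally finite variety (because $\End(\mA)$ is finite) and will then contain the inherently \nfb\ variety $\var\mA_2^1$, inherent nonfinite basability is inherited by $\var\End(\mA)$, yielding Claim~\ref{claim:infb}.

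To build the embedding, fix a 3-chain $c_0 < c_1 < c_2$ in $\mA$ and define $\rho \colon \mA \to \{c_0, c_1, c_2\}$ by the rule: $\rho(a) = c_0$ if $a \leqslant c_0$; $\rho(a) = c_1$ if $a \leqslant c_1$ but $a \not\leqslant c_0$; and $\rho(a) = c_2$ otherwise. Using that each principal ideal $\{a \in \mA : a \leqslant c_i\}$ is closed under $+$ (since $a,b \leqslant c_i$ forces $a+b \leqslant c_i$), a routine case analysis in the six symmetric cases determined by the pair $(\rho(a), \rho(b))$ shows that $\rho$ is a semilattice homomorphism. As $\rho$ visibly fixes $c_0, c_1, c_2$, it is a retraction: if $\iota \colon \{c_0, c_1, c_2\} \hookrightarrow \mA$ denotes the inclusion, then $\rho\iota = \mathrm{id}$, and hence the map $\phi \mapsto \iota\,\phi\,\rho$ is a semiring embedding of $\End(\{c_0, c_1, c_2\})$ into $\End(\mA)$, preserving both $+$ and $\cdot$ (the latter via $\rho \iota = \mathrm{id}$).

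The ai-semiring $\mA_2^1$ is in turn realized as the sub-ai-semiring of the 3-chain endomorphism semiring $\End(\{c_0, c_1, c_2\})$ consisting of endomorphisms that fix the top element $c_2$ (compare Figures~\ref{fig:a21} and~\ref{fig:A2asOPT}). Composing this inclusion with the embedding above places a copy of $\mA_2^1$ inside $\End(\mA)$, and the argument then concludes as outlined in the first paragraph.

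The main technical step is the verification that the prescribed map $\rho$ is a semilattice homomorphism; while conceptually straightforward, it must be checked carefully in each of the six cases for $(\rho(a), \rho(b))$, leveraging the subsemilattice property of the principal ideals $\downarrow c_0$ and $\downarrow c_1$. Once this is in hand, the rest is a clean assembly of already-established facts: the existence of the retraction gives the embedding; the embedding transports $\mA_2^1$ into $\End(\mA)$; and inherent nonfinite basability of $\var\mA_2^1$, together with the local finiteness of $\var\End(\mA)$, yields the conclusion.
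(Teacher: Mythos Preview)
Your embedding construction is essentially the same as the paper's: you retract $\mA$ onto a three-element chain $\{c_0,c_1,c_2\}$ via the map $\rho$, and then the assignment $\phi\mapsto\rho\phi\iota$ (in the paper's right-action notation) embeds the whole of $\End(\mC_3)\cong\End(\{c_0,c_1,c_2\})$ into $\End(\mA)$. This part is correct, and in fact slightly slicker than the paper's version, which takes $c_2$ to be the top of $\mA$; your case analysis for $\rho$ goes through without that hypothesis.

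The gap is in what you invoke after the embedding. You assert that $\mA_2^1$ is inherently \nfb, citing \cite{Dolinka09a,Dolinka09b}. This is not established there, and the present paper explicitly records in Section~\ref{sec:conclude} that whether $\mA_2^1$ is inherently (or even strongly) \nfb\ remains open. What \cite{Dolinka09b} gives (see Lemma~\ref{lem:max}) is that all Zimin words are \emph{maximal} for the \emph{other} \ais{} structure $\overline{\mA_2^1}$; it says nothing about isolation in $\mA_2^1$. Minimality is obtained in the paper only via $\mB_2^1$ (Lemma~\ref{lem:min}), which sits in $\var\mA_2^1$. So Proposition~\ref{prop:INFB} applies to $\End(\mC_3)$ precisely because \emph{both} $\mA_2^1$ and $\overline{\mA_2^1}$ embed there, supplying minimality and maximality respectively; neither six-element \ais{} alone is known to suffice.

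The fix is immediate given what you have already built: you embedded all of $\End(\mC_3)$ into $\End(\mA)$, not just $\mA_2^1$. Replace the appeal to the (unknown) inherent nonfinite basability of $\mA_2^1$ by Proposition~\ref{prop:O3}, which asserts that $\End(\mC_3)$ is inherently \nfb. Then $\End(\mC_3)\in\var\End(\mA)$, the latter variety is locally finite since $\End(\mA)$ is finite, and the conclusion follows. The final paragraph of your write-up, singling out $\mA_2^1$, can simply be deleted.
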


\begin{claim}
\label{claim:snfb}
The endomorphism semiring of a finite semilattice of size at least 5 is strongly \nfb.
\end{claim}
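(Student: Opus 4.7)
The plan is to split on the height of $\mA$. If the height of $\mA$ is at least $3$, then Claim~\ref{claim:infb} (which would already be proved by this point in the paper) asserts that $\End(\mA)$ is inherently nonfinitely based, and as noted in Section~\ref{subsec:snfb} every inherently \nfb{} finite \ais{} is strongly \nfb. So the work reduces to the case where the height of $\mA$ is at most $2$ but $|A|\ge 5$.

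A short combinatorial observation classifies such semilattices. If $\mA$ had two distinct maximal elements, their join (which must exist in the semilattice) would be strictly larger than both, contradicting maximality; hence $\mA$ has a unique top $\top$. Any two elements strictly below $\top$ must be pairwise incomparable, since a chain of length $2$ below $\top$ together with $\top$ itself would give a chain of length $3$, contradicting height $\le 2$. Therefore $\mA$ is isomorphic to the \emph{fan} $\mathcal{U}_n$ consisting of a top element together with an $(n-1)$-element antichain below it, where $n=|A|\ge 5$.

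The next step is to exhibit a nonabelian nilpotent group inside $(\End(\mA),\cdot)$ and invoke Proposition~\ref{prop:SNFB}. The identity map on $\mA$ is an idempotent in $\End(\mA)$, and its $\mH$-class in the multiplicative semigroup consists precisely of the automorphisms of $\mA$. For $\mA=\mathcal{U}_n$, any automorphism fixes the top and arbitrarily permutes the antichain, so this $\mH$-class is isomorphic to the symmetric group $S_{n-1}$. Since $n-1\ge 4$, the group $S_{n-1}$ contains a copy of $S_4$, and a Sylow $2$-subgroup of $S_4$ has order $8$ and is isomorphic to the dihedral group~$D_4$. This $D_4$ is nonabelian and, being a $2$-group, is nilpotent. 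Hence $(\End(\mA),\cdot)$ contains a nonabelian nilpotent subgroup, and Proposition~\ref{prop:SNFB} yields that $\End(\mA)$ is strongly \nfb.

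No serious obstacle is expected: the classification of height-$2$ semilattices is elementary, the identification of the $\mH$-class at $\mathrm{id}_\mA$ with $\mathrm{Aut}(\mA)$ is standard, and the embedding of $D_4$ in $S_4$ is routine group theory. The one subtlety worth flagging is that one cannot use $S_{n-1}$ itself to apply Proposition~\ref{prop:SNFB}, since symmetric groups are not nilpotent for $n-1\ge 3$; one must pass to a Sylow $2$-subgroup, which exists and has order at least $8$ precisely when $n-1\ge 4$, matching the hypothesis $|A|\ge 5$ sharply.
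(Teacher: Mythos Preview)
Your proof is correct and follows essentially the same approach as the paper: reduce via Claim~\ref{claim:infb} to height-$2$ semilattices, recognize these as a top element over an antichain (the paper calls this the $t$-pod $\mP_t$), locate the symmetric group $S_{n-1}$ inside the multiplicative semigroup, extract the dihedral group $D_4$ as a nonabelian nilpotent subgroup, and invoke Proposition~\ref{prop:SNFB}. The only cosmetic difference is that the paper reaches $S_{n-1}$ via the rook semiring $\mR_t$ (Proposition~\ref{prop:n-pod}), whereas you go directly through $\mathrm{Aut}(\mA)$ as the group of units of $\End(\mA)$; your route is slightly more direct, while the paper's detour through $\mR_t$ pays off later in the proof of Claim~\ref{claim:onlyif}.
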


\begin{claim}
\label{claim:onlyif}
The endomorphism semiring of a finite semilattice of size at least 3 is \nfb.
\end{claim}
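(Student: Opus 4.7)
My plan is to dispatch the bulk of cases via the already-proved Claims~\ref{claim:infb} and~\ref{claim:snfb}, and then apply Proposition~\ref{prop:B21} to the two residual cases. If $\mA$ has height at least $3$, then $\End(\mA)$ is inherently \nfb{} by Claim~\ref{claim:infb}. If $|\mA|\ge 5$, then $\End(\mA)$ is strongly \nfb{} by Claim~\ref{claim:snfb}. So only the cases $|\mA|\in\{3,4\}$ with height exactly~$2$ remain; up to isomorphism these are the three-element ``$\vee$-semilattice'' $\mA_\vee:=\{a,b,1\}$ with $1=a+b$, and the four-element semilattice $\mA_\vee^{(3)}:=\{a,b,c,1\}$ in which the three atoms are pairwise incomparable and each pair joins to~$1$. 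For each of $\End(\mA_\vee)$ and $\End(\mA_\vee^{(3)})$ I will verify the two hypotheses of Proposition~\ref{prop:B21}.

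For the first hypothesis, that $\mB_2^1\in\var\End(\mA_\vee)$, I would exhibit $\mB_2^1$ as a subsemiring. Writing each endomorphism of $\mA_\vee$ as the pair $(\alpha(a),\alpha(b))$, the six elements $\iota=(a,b)$, $\gamma=(b,1)$, $\delta=(1,a)$, $\gamma\delta=(a,1)$, $\delta\gamma=(1,b)$, $\omega=(1,1)$ are readily checked to be closed under composition and under pointwise join; a mechanical verification yields $\gamma^2=\delta^2=\omega$, $\gamma\delta\gamma=\gamma$, $\delta\gamma\delta=\delta$, and reveals that the induced Hasse diagram is isomorphic to Figure~\ref{fig:b21} via $\iota\leftrightarrow 1$, $\gamma\leftrightarrow c$, $\delta\leftrightarrow d$, $\omega\leftrightarrow 0$. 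For $\End(\mA_\vee^{(3)})$, the set of endomorphisms that fix $c$ and leave $\{a,b,1\}$ invariant is a subsemiring isomorphic to $\End(\mA_\vee)$, so $\mB_2^1\in\var\End(\mA_\vee^{(3)})$ as well.

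For the second hypothesis, I would show that $\End(\mA_\vee)$ satisfies $v_{n,2}^{(2)}=(v_{n,2}^{(2)})^2$ and $\End(\mA_\vee^{(3)})$ satisfies $v_{n,6}^{(4)}=(v_{n,6}^{(4)})^2$ for every $n\ge 2$. The choices of $(m,h)$ are dictated by the unit groups: the automorphism groups of $\mA_\vee$ and $\mA_\vee^{(3)}$ are $\mathbb{Z}_2$ and $S_3$ respectively, and an easy induction shows that every variable appears exactly $(2m)^h$ times in $v_{n,m}^{(h)}$, so the multiplicities $(2\cdot 2)^2=16$ and $(2\cdot 6)^4=12^4$ are divisible by $\exp(\mathbb{Z}_2)=2$ and $\exp(S_3)=6$, respectively. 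Hence under substitutions into the unit group, $v_{n,m}^{(h)}$ evaluates to the identity endomorphism, which is idempotent. When at least one variable is mapped to a constant (rank-$1$ endomorphism), the product collapses to a constant because constants are right-absorbing in composition, again an idempotent. What remains is the case where every variable is mapped to either a unit or to a non-constant endomorphism of rank strictly between $1$ and $|\mA|$; here I would argue that the sharply repetitive structure of $v_{n,m}^{(h)}$, together with the fact that the non-idempotent rank-$2$ endomorphisms of $\End(\mA_\vee)$ (and their analogues in $\End(\mA_\vee^{(3)})$) square to constants, forces the evaluation down to a rank-$1$ element.

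The main obstacle is this last step. A more uniform route would be to embed $\End(\mA_\vee)$ into $\mR_2$ and $\End(\mA_\vee^{(3)})$ into $\mR_3$ and then quote Proposition~\ref{prop:rook2and3}. Unfortunately, the natural Boolean-matrix representation of an endomorphism $\alpha$ of $\mA_\vee^{(k)}$ via the matrix with $(i,j)$-entry $1$ iff $a_i\le\alpha(a_j)$ assigns an all-ones column whenever $\alpha(a_j)=1$, and such matrices are not rook matrices. Constructing a suitable alternative embedding---say into a direct power $\mR_t^N$, or into an auxiliary \ais{} known to satisfy the same rook identities---or carrying out a direct combinatorial verification of $v_{n,m}^{(h)}=(v_{n,m}^{(h)})^2$ in $\End(\mA_\vee)$ and $\End(\mA_\vee^{(3)})$ is therefore the technical heart of the proof.
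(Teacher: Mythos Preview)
Your overall strategy---reduce via Claims~\ref{claim:infb} and~\ref{claim:snfb} to $\End(\mP_2)$ and $\End(\mP_3)$, then apply Proposition~\ref{prop:B21}---is exactly the paper's. Your embedding of $\mB_2^1$ into $\End(\mA_\vee)$ is correct and is essentially the one in~\eqref{eq:r2} read through the isomorphism $\End^0(\mP_2)\cong\mR_2$ of Proposition~\ref{prop:n-pod}. (Minor slip: the endomorphisms of $\mA_\vee^{(3)}$ that fix $c$ and leave $\{a,b,1\}$ invariant automatically fix $1$, so they form a copy of $\End^0(\mA_\vee)\cong\mR_2$, not of all of $\End(\mA_\vee)$; this does not affect your conclusion.)

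The genuine gap is your verification of the identity hypothesis. Your case split into ``all units / some constant / intermediate rank'' is more complicated than necessary, and you correctly flag the third case as unresolved. But there is no intermediate case. The structural fact you are missing is precisely Proposition~\ref{prop:n-pod}: \emph{every} endomorphism of $\mP_t$ either fixes the top element (and then lies in $\End^0(\mP_t)\cong\mR_t$) or is a constant map. So the correct dichotomy for a substitution $\varphi\colon X\to\End(\mP_t)$ is: either $x\varphi\in\End^0(\mP_t)$ for all variables $x$ of $w$, in which case the evaluation stays inside a copy of $\mR_t$ and Proposition~\ref{prop:rook2and3} gives $w\varphi=w^2\varphi$ directly; or some $x\varphi$ is constant, and then (as you already observed) $w\varphi$ is constant, hence idempotent. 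Your attempted route of embedding $\End(\mA_\vee)$ into $\mR_2$ cannot work for cardinality reasons ($|\End(\mP_2)|=10>7=|\mR_2|$); the point is the opposite containment $\mR_t\cong\End^0(\mP_t)\subseteq\End(\mP_t)$, with only constants left over.
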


\begin{proof}[Proof of Claim~\ref{claim:if}] The endomorphism semiring of the one-element semilattice is the one-element semiring, which is trivially \fb.

Up to isomorphism, the only two-element semilattice is the chain $\mC_2:=(\{0,1\},+)$ with addition derived from the order $0<1$. It has three endomorphisms: the two constant maps \ $\begin{matrix}0\mapsto 0\\ 1\mapsto 0\end{matrix}$ \ and \ $\begin{matrix}0\mapsto 1\\ 1\mapsto 1\end{matrix}$\,, and the identity map \ $\begin{matrix}0\mapsto 0\\ 1\mapsto 1\end{matrix}$\,. Each of these three maps is idempotent, that is, equal to its own square. Hence, multiplication in the endomorphism semiring  $\End(\mC_2)$ is idempotent. It is known \cite[Theorem 4.1]{Pastijn05} that every \ais{} with idempotent multiplication is \fb.
\end{proof}

For Claim~\ref{claim:infb}, we require two properties of the semirings $\overline{\mA_2^1}$ and $\mB_2^1$ from Section~\ref{subsec:b1a1}.

\begin{lemma}
\label{lem:max}
All Zimin words are maximal for the \ais{} $\overline{\mA_2^1}$.
\end{lemma}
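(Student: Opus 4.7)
The plan is to induct on $m$, showing that if $\overline{\mA_2^1}$ satisfies $Z_m \leq w$, then $w = Z_m$ as a word. For the base case $m=1$, two substitutions suffice. First, mapping $x_1 \mapsto 1$ and every other variable to $0$ forces $\con(w) \subseteq \{x_1\}$, since $1 \not\leq 0$ in $\overline{\mA_2^1}$. Second, mapping $x_1 \mapsto a$ yields $Z_1\varphi = a$ and $w\varphi = a^k$ where $k$ is the number of occurrences of $x_1$ in $w$; since $a^2 = 0$ in $\overline{\mA_2^1}$ but $a \not\leq 0$, we get $k = 1$, so $w = x_1$.

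For the inductive step, assume the lemma for $Z_{m-1}$ and suppose $\overline{\mA_2^1}$ satisfies $Z_m \leq w$. I would first establish three structural facts about $w$ by tailored ``probing'' substitutions: (a)~$\con(w) = \{x_1,\dots,x_m\}$, where the nontrivial direction uses $\varphi(x_i) = e$ and $\varphi(x_j) = 1$ for $j\neq i$, yielding $Z_m\varphi = e$ while $w\varphi = 1$, contradicting $e \not\leq 1$; (b)~the variable $x_m$ occurs in $w$ exactly once, so that $w = w_1 x_m w_2$ with $w_1,w_2\in\{x_1,\dots,x_{m-1}\}^*$, via $\varphi(x_m)=a$ and $\varphi(x_j)=1$ for $j<m$; (c)~$\con(w_1) = \con(w_2) = \{x_1,\dots,x_{m-1}\}$, using the identity $e\alpha e = e$ that holds in $\overline{\mA_2^1}$ for every $\alpha\neq 0$, so that with $\varphi(x_i)=e$, $\varphi(x_m)=a$, and the remaining variables mapped to $1$, one has $Z_m\varphi = e\cdot a\cdot e = e$, whereas a missing $x_i$ in $w_1$ (or symmetrically in $w_2$) forces $w\varphi \in \{a,\,ae\}$, both strictly below $e$.

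The crux is the final step: showing that $\overline{\mA_2^1}$ satisfies both $Z_{m-1} \leq w_1$ and $Z_{m-1} \leq w_2$, at which point the inductive hypothesis delivers $w_1 = w_2 = Z_{m-1}$ and thus $w = Z_m$. For an arbitrary substitution $\psi\colon\{x_1,\dots,x_{m-1}\}\to\overline{\mA_2^1}$, I would consider three extensions $\varphi_\eta$ of $\psi$ differing only in the value $\varphi_\eta(x_m) = \eta$ for $\eta\in\{e,\,ae,\,ea\}$. Writing $\alpha := Z_{m-1}\psi$, $\gamma := w_1\psi$, $\delta := w_2\psi$, the hypothesis $Z_m\leq w$ specialises to three inequalities $\alpha\eta\alpha \leq \gamma\eta\delta$ in $\overline{\mA_2^1}$. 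Using the chain-representation of $\overline{\mA_2^1}$ on $0<1<2$ from Figure~\ref{fig:A2asOPT-2}, each product $\alpha\eta\alpha$ and $\gamma\eta\delta$ becomes a short expression in the coordinate values $\alpha(1),\alpha(2),\gamma(1),\gamma(2),\delta(1),\delta(2)$; a case analysis over the six possible values of $\alpha$ shows that the conjunction of the three inequalities forces $\alpha \leq \gamma$ and $\alpha \leq \delta$ pointwise.

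I expect this last step to be the main obstacle. Each individual $\eta$ delivers only partial information about $\gamma$ and $\delta$; it is their combination---especially the pair $(\eta=ae,\eta=ea)$, needed in the ``exceptional'' cases $\alpha\in\{1,e\}$ where the identity $\alpha e\alpha = \alpha$ fails (indeed $1\cdot e\cdot 1 = e$ sits strictly above $1$)---that ultimately pins down $\alpha\leq\gamma$ and $\alpha\leq\delta$ in all cases. The computations are elementary thanks to the six-element size of $\overline{\mA_2^1}$, but demand careful bookkeeping.
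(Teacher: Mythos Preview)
Your proposal is correct. The paper does not actually prove this lemma: its entire proof reads ``This is established in the proof of~\cite[Theorem~10]{Dolinka09b}, where the ai-semiring $\overline{\mA_2^1}$ appears under notation $\mathbf{O}_2$.'' So you have supplied a self-contained argument where the paper defers to an external reference.

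Your approach mirrors in spirit the paper's proof of the companion Lemma~\ref{lem:min} (minimality of Zimin words for $\mB_2^1$): both induct on $m$, first pin down the shape of $w$ as $w_1 x_m w_2$ by substitutions that kill or isolate $x_m$, and then reduce to the inductive hypothesis. The main difference is in the reduction step. For $\mB_2^1$ the paper simply erases $x_1$ (substituting $1$) to drop directly to $Z_{m-1}$; this works because $\mB_2^1$ has a genuine multiplicative identity that interacts trivially with everything. For $\overline{\mA_2^1}$ there is no such ``free deletion'' of the middle variable~$x_m$, so you instead freeze an arbitrary substitution $\psi$ on $x_1,\dots,x_{m-1}$ and probe with three values $\eta\in\{e,ae,ea\}$ at $x_m$, combining the resulting inequalities $\alpha\eta\alpha\le\gamma\eta\delta$ to force $\alpha\le\gamma$ and $\alpha\le\delta$. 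The case analysis is short (six values of $\alpha$, and indeed only $\alpha\in\{1,e\}$ genuinely need more than one~$\eta$), and the chain representation from Fig.~\ref{fig:A2asOPT-2} makes each check a two-coordinate computation. This is a perfectly good substitute for looking up the cited argument in~\cite{Dolinka09b}.
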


\begin{proof}
This is established in the proof of~\cite[Theorem~10]{Dolinka09b}, where the \ais{} $\overline{\mA_2^1}$ appears under notation $\mathbf{O}_2$.
\end{proof}

\begin{lemma}
\label{lem:min}
All Zimin words are minimal for the \ais{} $\mB_2^1$.
\end{lemma}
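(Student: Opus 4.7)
The goal is to show: if $\mB_2^1 \models w' \leqslant Z_m$ for some word $w'$, then $w' = Z_m$. The plan is to first trap the content of $w'$ inside $\con(Z_m)$, then use a carefully chosen family of substitutions into $B_2^1$ whose images in the Brandt monoid are forced into a rigid alternating pattern, which lets me reconstruct $Z_m$ letter by letter.

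For the content step, I would observe that if some variable $y \notin \con(Z_m) = \{x_1, \dots, x_m\}$ appeared in $w'$, the substitution sending $y \mapsto 0$ and every $x_i \mapsto 1$ would evaluate $Z_m$ to $1$ but $w'$ to $0$, contradicting $w' \leqslant Z_m$, because $0 \not\leqslant 1$ in $\mB_2^1$ (Fig.~\ref{fig:b21} has $1$ as the minimum of the semilattice). Hence $\con(w') \subseteq \{x_1, \dots, x_m\}$.

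The main tool would be a family of substitutions $\varphi_k \colon X \to B_2^1$, one for each $k \in \{1, \dots, m\}$, defined by $\varphi_k(x_i) = 1$ for $i < k$, $\varphi_k(x_k) = c$, and $\varphi_k(x_i) = d$ for $i > k$. Using the Brandt computations $1 \cdot c \cdot 1 = c$ and $c \cdot d \cdot c = c$, a short induction on the recursion $Z_{n+1} = Z_n\, x_{n+1}\, Z_n$ yields $Z_m \varphi_k = c$ for every $k$. Since $c$'s only $\leqslant$-predecessor in $\mB_2^1$ is $c$ itself, the hypothesis then forces $w' \varphi_k = c$. Now let $R_k$ denote the subword of $w'$ obtained by deleting every occurrence of $x_1, \dots, x_{k-1}$; since those are exactly the letters $\varphi_k$ sends to the identity, $w' \varphi_k$ evaluates to the image of $R_k$ under $x_k \mapsto c$, $x_i \mapsto d$ for $i > k$. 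The combinatorial key is that a nonzero product of $c$'s and $d$'s in $B_2$ must strictly alternate, and among such products only those of the shape $c(dc)^l$ equal $c$; hence $R_k$ reads $x_k u_1 x_k u_2 \cdots u_l x_k$ with each $u_j \in \{x_{k+1}, \dots, x_m\}$, and the subsequence $u_1 u_2 \cdots u_l$ is precisely $R_{k+1}$.

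A downward induction from $k = m$ to $k = 1$ will then give $R_k = Z_{m-k+1}$ under the relabeling $x_{k+j-1} \leftrightarrow x_j$. The base case reduces to $c^{|R_m|} = c$, forcing $R_m = x_m$; the inductive step plugs $R_{k+1} = Z_{m-k}$ into the alternating description of $R_k$, which matches the recursive definition of $Z_{m-k+1}$ exactly. At $k = 1$ we obtain $R_1 = w' = Z_m$. The hard part is designing the family $\{\varphi_k\}$ so that $Z_m \varphi_k$ consistently lands on the element $c$, whose unique $\leqslant$-predecessor is itself, while varying $k$ successively unmasks the nested Zimin structure; once the $\varphi_k$'s are in hand, the alternating-product rigidity of $B_2$ carries the rest of the argument.
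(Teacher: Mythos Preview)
Your proof is correct and rests on the same core idea as the paper's: the substitution sending $x_k \mapsto c$, $x_i \mapsto d$ for $i>k$, and $x_i \mapsto 1$ for $i<k$ evaluates $Z_m$ to the $\leqslant$-minimal element $c$, after which the alternating-product rigidity of $B_2$ forces the positions of $x_k$. The only difference is bookkeeping: the paper runs an upward induction on $m$ (substitute $1$ for $x_1$, invoke the hypothesis for $Z_{m-1}$, then apply the single substitution $\psi=\varphi_1$), whereas you fix $m$ and run a downward induction on $k$ using the whole family $\{\varphi_k\}$ at once---a slightly cleaner packaging of the same argument.
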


\begin{remark}
The reviewer has informed us that the result of Lemma~\ref{lem:min} already appeared in the Master's thesis \cite{Yi24}. As the thesis is not readily accessible, we have retained the proof of the lemma for the reader’s convenience.
\end{remark}

\begin{proof}
We have to show that for every $m=1,2,\dots$, and every identity $w\leqslant Z_m$ that holds in $\mB_2^1$, the word $w$ coincides with the Zimin word $Z_m$. We proceed by induction on $m$.

When $m=1$, we have $Z_1=x_1$. Define a substitution $\varphi\colon X\to B_2^1$ by setting $x_1\varphi=c$ and $x\varphi=0$ for all variables $x\ne x_1$. Since $w\leqslant x_1$ holds in $\mB_2^1$, we must have $w\varphi\leqslant c=x_1\varphi$ in the semilattice $(B_2^1,\leqslant)$, whence $w\varphi=c$ because $c$ is a minimal element of the semilattice (see Fig.~\ref{fig:b21}). By the definition of $\varphi$ and the fact that $c^2=0$ in $\mB_2^1$, the equality $w\varphi=c$ is only possible if no variable other than $x_1$ occurs in $w$ and $x_1$ occurs in $w$ exactly once. Hence $w$ coincides with $Z_1=x_1$.

Now let $m>1$. If $w=x_1^k$ for some positive integer $k$, consider a substitution $\chi\colon X\to B_2^1$ such that $x_1\chi=1$, $x_2\chi=c$ and $x\chi=d$ for all $x\ne x_1,x_2$. Then $w\chi=1\nleqslant Z_m\chi=c$, which contradicts the assumption that the identity $w\leqslant Z_m$ that holds in $\mB_2^1$. Thus, $w$ must contain a variable other than $x_1$, and hence the word $w'$, obtained from $w$ by removing all occurrences of $x_1$, is nonempty.

Substituting the element 1 for the variable $x_1$ in the identity $w\leqslant Z_m$, we get that $\mB_2^1$ satisfies the identity $w'\leqslant Z'_m$ where the word $Z'_m$ is obtained from $Z_m$ by removing all occurrences of $x_1$. The word $Z'_m$ is nothing but the Zimin word $Z_{m-1}$ with the subscript of each variable increased by 1. By the induction assumption, the word $w'$ coincides with $Z'_m$. Since the word $w$ becomes $Z'_m$ after removing all occurrences of $x_1$, we conclude that
\[
{w}=x_1^{\varepsilon_1}x_2x_1^{\varepsilon_2}x_3x_1^{\varepsilon_3}x_2x_1^{\varepsilon_4}x_4\cdots x_3x_1^{\varepsilon_{2^{m-1}-1}}x_2x_1^{\varepsilon_{2^{m-1}}},
\]
where $\varepsilon_i$, $i=1,2,\dots,2^{m-1}$, are nonnegative integers. (If $\varepsilon_i=0$, we understand $x_1^{\varepsilon_i}$ as the empty word.) Thus, the word $w$ is obtained from $Z_m$ by substituting the word $x_1^{\varepsilon_i}$ for the $i$-th from the left occurrence of the variable $x_1$.

Define a substitution $\psi\colon X\to B_2^1$ by setting $x_1\psi=c$ and $x\psi=d$ for all variables $x\ne x_1$. Then $Z_m\psi=c$ due to the equality $cdc=c$ in $\mB_2^1$. Since $w\leqslant Z_m$ holds in $\mB_2^1$, we must have $w\psi\leqslant c=Z_m\psi$ in $(B_2^1,\leqslant)$, whence $w\psi=c$ because of minimality of $c$.

If $\varepsilon_i>1$ for some $i=1,2,\dots,2^{m-1}$, then the word $x_1^2$ occurs in ${w}$. Then $w\psi$ factors through $x_1^2\psi=c^2=0$ so $w\psi=0\ne c$.

If $\varepsilon_i=0$ for some $i=2,\dots,2^{m-1}-1$, then for some $x_j$ and $x_k$ with $j,k>1$, the word $x_jx_k$ occurs in ${w}$. Then $w\psi$ factors through $(x_jx_k)\psi=d^2=0$ so $w\psi=0\ne c$.

Thus, $\varepsilon_i=1$ for all $i=2,\dots,2^{m-1}-1$ and $\varepsilon_1,\varepsilon_{2^{m-1}}\in\{0,1\}$. If $\varepsilon_1=0$, then $w\psi\in\{d,dc\}$ due to the equality $dcd=d$ in $\mB_2^1$. Again, $w\psi\ne c$. Similarly, if $\varepsilon_{2^{m-1}}=0$, then $w\psi\in\{cd,d\}$ and $w\psi\ne c$.

We have shown that the equality $w\psi=c$ forces $\varepsilon_i=1$ for each $i=1,2,\dots,2^{m-1}$. Hence, the word ${w}$ coincides with $Z_m$.
\end{proof}

To complete the proof of Claim~\ref{claim:infb}, we require an intermediate result of independent interest. By the $n$-element chain, we mean the semilattice $\mC_n:=(\{0,1,\dots,n-1\},+)$, whose addition is induced by the natural order $0<1<\cdots<n-1$. The finite basis problem for the endomorphism semirings $\End(\mC_n)$ was addressed in~\cite{GusVol25}, where it was proved that the \ais{} $\End(\mC_3)$ is \nfb{} \cite[Theorem 2]{GusVol25}. Here, we establish the following much stronger fact via a much simpler proof.

\begin{proposition}
\label{prop:O3}
The \ais{} $\End(\mC_3)$ is inherently \nfb.
\end{proposition}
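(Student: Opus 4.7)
The plan is to apply Proposition~\ref{prop:INFB} directly. Since $\End(\mC_3)$ is finite, the variety $\var\End(\mC_3)$ is locally finite, so the only thing left to check is that every Zimin word $Z_m$ is isolated for $\End(\mC_3)$, i.e., both minimal and maximal. This will be achieved by combining Lemmas~\ref{lem:max} and~\ref{lem:min} with the observation that both $\overline{\mA_2^1}$ and $\mB_2^1$ live in $\var\End(\mC_3)$.

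The first step will be to recognise $\mA_2^1$ and $\overline{\mA_2^1}$ as subsemirings of $\End(\mC_3)$. The faithful representations displayed in Figures~\ref{fig:A2asOPT} and~\ref{fig:A2asOPT-2} identify the monoid $A_2^1$ with the set of endomorphisms of the chain $0<1<2$ that fix, respectively, the top and the bottom element, and the text immediately after each figure notes that the pointwise order of these endomorphisms coincides with the order shown in Figure~\ref{fig:a21} (respectively, Figure~\ref{fig:a21-2}). Since addition in $\End(\mC_3)$ is pointwise, these embeddings preserve both the multiplicative and the additive structure, so $\mA_2^1$ and $\overline{\mA_2^1}$ are genuine ai-subsemirings of $\End(\mC_3)$. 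In particular, $\overline{\mA_2^1}\in\var\End(\mC_3)$, and combining the subsemiring $\mA_2^1\subseteq\End(\mC_3)$ with the fact recalled at the end of Section~\ref{subsec:varieties} that $\mB_2^1\in\var\mA_2^1$, one obtains $\mB_2^1\in\var\End(\mC_3)$.

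The second step exploits the trivial transfer principle that if $\mS'\in\var\mS$, then every identity satisfied by $\mS$ is also satisfied by $\mS'$; in particular every inequality $u\leqslant v$ that holds in $\mS$ holds in $\mS'$. Contrapositively, if a word $w$ is maximal (respectively, minimal) for some $\mS'\in\var\mS$, then it is maximal (respectively, minimal) for $\mS$ itself. Applying this with $\mS=\End(\mC_3)$, Lemma~\ref{lem:max} taken with $\mS'=\overline{\mA_2^1}$ shows that every Zimin word is maximal for $\End(\mC_3)$, and Lemma~\ref{lem:min} taken with $\mS'=\mB_2^1$ shows that every Zimin word is minimal for $\End(\mC_3)$. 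Hence all Zimin words are isolated for $\End(\mC_3)$, and Proposition~\ref{prop:INFB} delivers the claim.

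There is no substantial obstacle once the two representations of $A_2^1$ are in place: the entire argument amounts to bookkeeping about where each auxiliary ai-semiring lives. The only point requiring a modicum of care is verifying that the two embeddings really respect addition (as opposed to just multiplication), which is precisely what the two distinct semilattice orders on $A_2^1$ in Figures~\ref{fig:a21} and~\ref{fig:a21-2} are designed to record.
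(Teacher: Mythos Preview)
Your proof is correct and follows essentially the same route as the paper's own argument: both embed $\mA_2^1$ and $\overline{\mA_2^1}$ into $\End(\mC_3)$ via the representations of Figures~\ref{fig:A2asOPT} and~\ref{fig:A2asOPT-2}, pull $\mB_2^1$ into $\var\End(\mC_3)$ through $\var\mA_2^1$, and then transfer the maximality and minimality of Zimin words from Lemmas~\ref{lem:max} and~\ref{lem:min} up to $\End(\mC_3)$ before invoking Proposition~\ref{prop:INFB}. Your explicit mention that finiteness of $\End(\mC_3)$ guarantees local finiteness of its variety is a small addition that the paper leaves implicit.
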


\begin{proof}
It is immediate from the definitions that if a word $w$ is {minimal} [{maximal}] for an \ais{} $\mS$, then $w$  is {minimal} [respectively, {maximal}] for any \ais{} $\mT$ such that the variety $\var\mT$ contains $\mS$. In Section~\ref{subsec:b1a1}, it is shown that the \ais{} $\overline{\mA_2^1}$ is isomorphic to a subsemiring of $\End(\mC_3)$; see Fig.~\ref{fig:A2asOPT-2}. Hence, we have $\overline{\mA_2^1}\in\var\End(\mC_3)$, and using Lemma~\ref{lem:max}, we conclude that all Zimin words are maximal for the \ais{} $\End(\mC_3)$.

Similarly, the \ais{} $\mA_2^1$ is isomorphic to a subsemiring of $\End(\mC_3)$; see Fig.~\ref{fig:A2asOPT}. Hence, the variety $\var\End(\mC_3)$ contains $\mA_2^1$. As discussed at the end of Section~\ref{subsec:varieties},
the variety $\var\mA_2^1$ contains $\mB_2^1$, whence $\mB_2^1\in\var\End(\mC_3)$. Using Lemma~\ref{lem:min}, we conclude that all Zimin words are minimal for the \ais{} $\End(\mC_3)$. Thus, all Zimin words are isolated for  $\End(\mC_3)$, and invoking Proposition~\ref{prop:INFB} completes the proof.
\end{proof}

\begin{proof}[Proof of Claim~\ref{claim:infb}] We aim to show that if a finite semilattice $\mA=(A,+)$ is such that the partially ordered set $(A,\leqslant)$ contains a three-element chain, then the endomorphism semiring of $\mA$ is inherently \nfb.

Take a chain $v_0<v_1<v_2$ in $(A,\leqslant)$ such that $v_2$ is the greatest element of $(A,\leqslant)$ and define the subsets
\begin{align*}
A_0&:=\{a\in A\mid a\leqslant v_0\},\\
A_1&:=\{a\in A\mid a\leqslant v_1,\ a\nleqslant v_0\},\\
A_2&:=\{a\in A\mid a\leqslant v_2,\ a\nleqslant v_1\}.
\end{align*}
By the choice of $v_2$, the sets $A_0,A_1,A_2$ form a partition of $A$ illustrated in Fig.~\ref{fig:partition}.
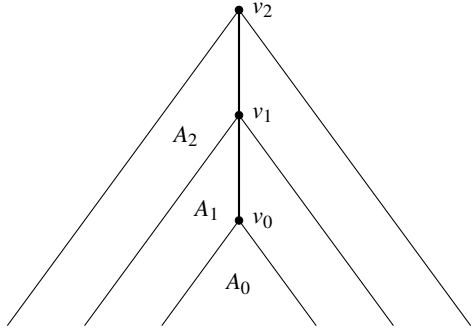
\begin{figure}[ht]
\begin{tikzpicture}[scale=1.4, every node/.style={font=\small}]

% Node positions
\coordinate (a1) at (0,0);
\coordinate (a2) at (0,1);
\coordinate (a3) at (0,2);

% Cone widths
\def\w{2.2}

% Cones (triangles)
\begin{scope}
  \draw ($(a1)+(-\w/3, -1)$) -- (a1) -- ($(a1)+(\w/3, -1)$);
  \node at ($(a1)+(0,-.6)$) {$A_0$};
\end{scope}

\begin{scope}
 \draw ($(a2)+(-\w/1.5, -2)$) -- (a2) -- ($(a2)+(\w/1.5, -2)$);
  \node at ($(a2)+(-.3,-.9)$) {$A_1$};
\end{scope}

\begin{scope}
 \draw ($(a3)+(-\w, -3)$) -- (a3) -- ($(a3)+(\w, -3)$);
  \node at ($(a3)+(-.5,-1.2)$) {$A_2$};
\end{scope}

% Points and labels
\node[draw, circle, fill=black, inner sep=1pt, label=right:$v_0$] at (a1) {};
\node[draw, circle, fill=black, inner sep=1pt, label=right:$v_1$] at (a2) {};
\node[draw, circle, fill=black, inner sep=1pt, label=right:$v_2$] at (a3) {};

% Connecting lines
\draw[thick] (a1) -- (a2) -- (a3);

\end{tikzpicture}
\caption{The partition of $A$ induced by the  chain $v_0<v_1<v_2$}\label{fig:partition}
\end{figure}

It is easy to see that the map $\pi\colon A\to\{0,1,2\}$ defined by setting $a\pi=i$ if and only if $a\in A_i$ is a homomorphism of the semilattice $\mA$ onto the three-element chain $\mC_3$. Now, for every endomorphism $\gamma\in\End(\mC_3)$, define a function $\overline{\gamma}\colon A\to A$ by setting $a\overline{\gamma}=v_{a\pi\gamma}$. In the visual terms of Fig.~\ref{fig:partition}, the elements of $A_i$ are sent by $\overline{\gamma}$ \ to the vertex $v_{i\gamma}$ of the `cone' $A_{i\gamma}$.

For any $a,b\in A$, we have
\begin{align*}
 (a+b)\overline{\gamma}&=v_{(a+b)\pi\gamma}&&\text{by definition} \\
                       &=v_{(a\pi+b\pi)\gamma} &&\text{since $\pi$ is a semilattice homomorphism}\\
                       &=v_{a\pi\gamma+b\pi\gamma}&&\text{since $\gamma$ is a semilattice endomorphism.}
\end{align*}
Semilattice addition in each of the chains $v_0<v_1<v_2$ and $0<1<2$ yields the maximum of the two summands. Therefore, for all $i,j\in\{0,1,2\}$,
\begin{equation}\label{eq:max}
v_i+v_j=\max\{v_i,v_j\}=v_{\max\{i,j\}}=v_{i+j}.
\end{equation}
Using \eqref{eq:max}, we obtain
\[
v_{a\pi\gamma+b\pi\gamma}=v_{\max\{a\pi\gamma,\,b\pi\gamma\}}=\max\{v_{a\pi\gamma},\,v_{b\pi\gamma}\}=v_{a\pi\gamma}+v_{b\pi\gamma}=a\overline{\gamma}+b\overline{\gamma}.
\]
We have thus verified that $\overline{\gamma}$ is an endomorphism of the semilattice $\mA$.

By the definition of the map $\pi$, we have $v_i\pi=i$ for each $i=0,1,2$. Hence,
\begin{equation}\label{eq:compose}
v_i\overline{\gamma}=v_{i\gamma}
\end{equation}
for every endomorphism $\gamma\in\End(\mC_3)$, so $\gamma$ can be recovered from the restriction of $\overline{\gamma}$ to the set $\{v_0,v_1,v_2\}$. This shows that the map $\gamma\mapsto\overline{\gamma}$ is one-to-one.

Let now $\gamma_1,\gamma_2\in\End(\mC_3)$ and let $a\in A$ be an arbitrary element. We have
\begin{gather*}
a(\overline{\gamma_1+\gamma_2})=v_{a\pi(\gamma_1+\gamma_2)}=v_{a\pi\gamma_1+a\pi\gamma_2}\stackrel{\eqref{eq:max}}=v_{a\pi\gamma_1}+v_{a\pi\gamma_2}=a\overline{\gamma_1}+a\overline{\gamma_2},\\
a(\overline{\gamma_1\gamma_2})=v_{a\pi(\gamma_1\gamma_2)}=v_{(a\pi\gamma_1)\gamma_2}\stackrel{\eqref{eq:compose}}=v_{a\pi\gamma_1}\overline{\gamma_2}=(a\overline{\gamma_1})\overline{\gamma_2}= a(\overline{\gamma_1}\cdot\overline{\gamma_2}).
\end{gather*}
Since $a\in A$ was arbitrary, we conclude that $\overline{\gamma_1+\gamma_2}=\overline{\gamma_1}+\overline{\gamma_2}$ and $\overline{\gamma_1\gamma_2}=\overline{\gamma_1}\cdot\overline{\gamma_2}$. This means that
the map $\gamma\mapsto \overline{\gamma}$ is a semiring homomorphism.

We have thus shown that the \ais{} $\End(\mC_3)$ is isomorphic to a subsemiring of the \ais{} $\End(\mA)$. Now Proposition~\ref{prop:O3} applies, yielding that $\End(\mA)$ is inherently \nfb.
\end{proof}

Since Claim~\ref{claim:infb} has been established, we only need to consider finite semilattices of height 2 in the subsequent proofs of Claims~\ref{claim:snfb} and~\ref{claim:onlyif}. For each $t\ge 2$, there is a unique, up to isomorphism, semilattice of height 2 and size $t+1$, namely, the $t$-\emph{pod} $\mP_t$ shown in Fig.~\ref{fig:Ln}.
\begin{figure}[ht]
\begin{tikzpicture}[x=1cm,y=1.2cm]
\node at (0,2)    (0)  {$0$};
\node at (-3,1)  (1) {$1$};
\node at (-1.5,1)  (2) {$2$};
\node at (0,1)    (3)  {$3$};
\node at (1.5,1)  (cdots) {$\cdots$};
\node at (3,1)  (n) {$t$};
\draw (0)   -- (1);
\draw (0)  --  (2);
\draw (0)  --  (3);
\draw (0)  --  (n);
\end{tikzpicture}
\caption{The semilattice $\mP_t$}\label{fig:Ln}
\end{figure}

The properties of the endomorphism semiring of the $t$-pod relevant for the proofs of Claims~\ref{claim:snfb} and~\ref{claim:onlyif} are collected in the following proposition.

\begin{proposition}
\label{prop:n-pod}
The set $\End^0(\mP_t)$ of all endomorphisms of the $t$-pod $\mP_t$ that fix its greatest element $0$ forms a subsemiring of $\End(\mP_t)$. This subsemiring is isomorphic to the rook semiring $\mR_t$ and all endomorphisms in $\End(\mP_t)\setminus\End^0(\mP_t)$ are constant maps.
\end{proposition}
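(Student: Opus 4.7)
The plan is to analyze $\End(\mP_t)$ by case-distinguishing on the image of the greatest element $0$. For any $\alpha\in\End(\mP_t)$ and every $a\in\mP_t$, order-preservation of $\alpha$ gives $a\alpha\leqslant 0\alpha$. If $0\alpha$ is a nonzero atom, say $0\alpha=k\in\{1,\dots,t\}$, then the only element $\leqslant k$ in $\mP_t$ is $k$ itself (the nonzero atoms are pairwise incomparable and $0$ is the top), so $a\alpha=k$ for every $a\in\mP_t$; hence $\alpha$ is the constant map to $k$. This already proves the third statement. What remains are the endomorphisms with $0\alpha=0$, i.e., those in $\End^0(\mP_t)$; closure under the two operations is immediate, since $0(\alpha\beta)=(0\alpha)\beta=0$ and $0(\alpha+\beta)=0\alpha+0\beta=0$, which proves the first statement.

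For the isomorphism with $\mR_t$, I will show that each $\alpha\in\End^0(\mP_t)$ is determined by a partial injection on $\{1,\dots,t\}$, with the ``undefined'' value encoded by $0$. Since $i+j=0$ for any distinct atoms $i,j\in\{1,\dots,t\}$, the endomorphism condition $(i+j)\alpha=i\alpha+j\alpha$ becomes $i\alpha+j\alpha=0$. Inspecting $\mP_t$, a sum of two elements equals $0$ precisely when one of them is $0$ or when they are distinct nonzero atoms; thus the restriction of $\alpha$ to $\{1,\dots,t\}$ cannot carry two distinct atoms to the same nonzero atom, and conversely any map with this property extends to an endomorphism fixing $0$. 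Encoding such $\alpha$ by the matrix $M^\alpha\in R_t$ with $M^\alpha_{ij}=1\iff i\alpha=j$ then yields a bijection $\End^0(\mP_t)\to R_t$.

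Finally, I check that this bijection is a semiring homomorphism. For multiplication, $(M^\alpha M^\beta)_{ij}=\sum_k M^\alpha_{ik}M^\beta_{kj}$ equals $1$ exactly when there is some $k$ with $i\alpha=k$ and $k\beta=j$, i.e., $i(\alpha\beta)=j$; this gives $M^{\alpha\beta}=M^\alpha M^\beta$. For the rook semiring's entrywise (Hadamard) addition, $(M^\alpha+M^\beta)_{ij}=M^\alpha_{ij}\,M^\beta_{ij}$ equals $1$ iff $i\alpha=i\beta=j$; this is exactly when the semilattice join $i\alpha+i\beta$ in $\mP_t$ equals the nonzero atom $j$, so $M^{\alpha+\beta}=M^\alpha+M^\beta$.

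The only subtle point is this addition correspondence: it hinges on the specific order structure of $\mP_t$, where distinct nonzero atoms join to the top $0$, converting the pointwise semilattice sum of two $0$-fixing endomorphisms into the entrywise product of their indicator matrices---exactly the definition of rook semiring addition. Everything else is a routine verification once the partial-injection description of $\End^0(\mP_t)$ is in place.
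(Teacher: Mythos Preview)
Your proof is correct and follows essentially the same approach as the paper: both use order-preservation to show that endomorphisms not fixing $0$ are constant, and both set up the bijection $\alpha\mapsto M^\alpha$ with rook matrices via the observation that distinct atoms of $\mP_t$ join to $0$, forcing partial injectivity. The only differences are cosmetic---you handle the constant maps first and spell out the homomorphism verification that the paper leaves as ``routine.''
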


\begin{proof}
That $\End^0(\mP_t)$ is a subsemiring of $\End(\mP_t)$ is immediate.

For any $\alpha\in\End^0(\mP_t)$, let $M^\alpha$ denote the $t\times t$ matrix whose rows and columns are numbered by $1,2,\dots,t$, and whose entry in position $(i,j)$ is 1 if $i\alpha=j$ and 0 otherwise. By definition, each row of $M^\alpha$ contains at most one entry equal to 1. Suppose that some column of $M^\alpha$ contains 1s in two different rows $i$ and $i'$. Then $i\alpha=i'\alpha$, whence
\[
(i+i')\alpha=i\alpha+i'\alpha=i\alpha+i\alpha=i\alpha
\]
since $\alpha$ is an endomorphism of $\mP_t$ and addition in $\mP_t$ is idempotent. However, $i+i'=0$ since $i\ne i'$, and so $(i+i')\alpha=0$ because $\alpha$ fixes 0. This contradicts $i\alpha\ne 0$. Thus, each column of $M^\alpha$ also contains at most one entry equal to 1, and hence $M^\alpha$ is a rook matrix.

Conversely, for any $t\times t$ rook matrix $M=(m_{ij})$, let $\alpha_M$ denote the self-map on the set $\{0,1,\dots,t\}$ defined by setting $0\alpha_M:=0$ and
\[
i\alpha_M:=\begin{cases}
j&\text{if }\ m_{ij}=1,\\
0&\text{otherwise},
\end{cases}\ \text{ for each }\ i=1,2,\dots,t.
\]
The map is well defined as each row of $M$ contains at most one 1. Since each column of $M$ also contains at most one 1, for any two different $i,i'\in\{1,\dots,t\}$, we have either $i\alpha_M\ne i'\alpha_M$ or $i\alpha_M= i'\alpha_M=0$. In either case, $(i+i')\alpha_M=i\alpha_M+i'\alpha_M$ as both sides are equal to 0. Hence, $\alpha_M$ is an endomorphism of $\mP_t$ that fixes 0.

By construction, $M^{\alpha_M}=M$ and $\alpha_{M^\alpha}=\alpha$. Thus, the maps $\alpha\mapsto M^\alpha$ and $M\mapsto\alpha_M$ form mutually inverse bijections between $\End^0(\mP_t)$ and the set $R_t$ of all $t\times t$ rook matrices. It is routine to verify that these maps preserve both + and $\cdot$. Hence, the subsemiring $\End^0(\mP_t)$ and the rook semiring $\mR_t$ are isomorphic.

Finally, any $\gamma\in\End(\mP_t)\setminus\End^0(\mP_t)$ does not fix 0. For any $i\in\{0,1,\dots,t\}$, we have $i\leqslant 0$ whence $i\gamma\leqslant 0\gamma$ as the endomorphism $\gamma$ preserves the order $\leqslant$. Since $0\gamma\ne 0$, the element $0\gamma$ is minimal with respect to  $\leqslant$, and therefore, the inequality $i\gamma\leqslant 0\gamma$ implies the equality $i\gamma=0\gamma$. Thus, $\gamma$ is the constant map that sends each $i\in\{0,1,\dots,t\}$ to $0\gamma$.
\end{proof}

\begin{proof}[Proof of Claim~\ref{claim:snfb}] We aim to show that the endomorphism semiring of a finite semilattice containing at least five elements is strongly \nfb. Claim~\ref{claim:infb}, together with the fact that inherently \nfb{} semirings are strongly \nfb, allows us to restrict to the case where the semilattice has height 2. In that case, it is isomorphic to the $t$-pod $\mP_t$ with $t\ge 4$.

By Proposition~\ref{prop:n-pod}, the endomorphism semiring $\End(\mP_t)$ contains a subsemiring isomorphic to the rook semiring $\mR_t$. Recall that a \emph{permutation} matrix is a zero-one square matrix with exactly one entry equal to 1 in each row and each column. The $t\times t$ permutation matrices form a subgroup in the multiplicative semigroup of $\mR_t$, and this subgroup is isomorphic to the group of all permutations of the set $\{1,2,\dots,t\}$. For $t\ge 4$, the latter group possesses non-abelian nilpotent subgroups, for instance, the subgroup generated by permutations (1234) and (13), which is isomorphic to the eight-element dihedral group (i.e., the symmetry group of a square).

Proposition~\ref{prop:SNFB} now applies, showing that the semiring $\End(\mP_t)$ with  $t\ge 4$ is strongly \nfb.
\end{proof}

\begin{proof}[Proof of Claim~\ref{claim:onlyif}] We aim to show that the endomorphism semiring of a finite semilattice containing at least three elements is \nfb. Claims~\ref{claim:infb} and~\ref{claim:snfb} reduce the task to the case where the semilattice has height 2 and size 3 or 4. In that case, it is isomorphic to the $t$-pod $\mP_t$ with $t\in\{2,3\}$.

We will employ Proposition~\ref{prop:B21}, so we now show that the semirings $\End(\mP_2)$ and $\End(\mP_3)$ satisfy its conditions.

By Proposition~\ref{prop:n-pod}, the subsemiring $\End^0(\mP_t)$ of $\End(\mP_t)$ is isomorphic to the semiring of all $t\times t$ rook matrices. The $2\times 2$ rook matrices are listed in~\eqref{eq:r2}.
\begin{equation}
\label{eq:r2}
\begin{tabular}{ccccccc}
$\left(\begin{matrix} 0&0\\0&0\end{matrix}\right)$
&
$\left(\begin{matrix} 1&0\\0&0\end{matrix}\right)$
&
$\left(\begin{matrix} 0&1\\0&0\end{matrix}\right)$
&
$\left(\begin{matrix} 0&0\\1&0\end{matrix}\right)$
&
$\left(\begin{matrix} 0&0\\0&1\end{matrix}\right)$
&
$\left(\begin{matrix} 1&0\\0&1\end{matrix}\right)$
&
$\left(\begin{matrix} 0&1\\1&0\end{matrix}\right)$\\
$\downarrow$ & $\downarrow$ & $\downarrow$ & $\downarrow$ & $\downarrow$ & $\downarrow$ &\\
0 & $cd$ & $c$ & $d$ & $dc$ & 1 &
\end{tabular}
\end{equation}
It is well known (and easy to verify) that the first six matrices in \eqref{eq:r2} form a subsemiring of $\mR_2$, and this subsemiring is isomorphic to the \ais{} $\mB_2^1$ under the map indicated in~\eqref{eq:r2}. Hence, the variety $\var\End(\mP_2)$ contains  $\mB_2^1$. By padding the matrices from \eqref{eq:r2} with zeros  to form $3 \times 3$ matrices, we obtain an embedding of $\mathcal{R}_2$ into $\mathcal{R}_3$, whence the variety $\var\End(\mP_3)$ also contains $\mB_2^1$. This verifies one of the two conditions of Proposition~\ref{prop:B21}.

To verify the other condition of Proposition~\ref{prop:B21}, we first show that for every $t\ge 2$ and every word $w$, the identity $w=w^2$ holds in the endomorphism semiring $\End(\mP_t)$ whenever it holds in the rook semiring $\mR_t$. We need to verify that the words $w$ and $w^2$ evaluate to the same element under an arbitrary substitution $\varphi\colon X\to\End(\mP_t)$. If $x\varphi\in\End^0(\mP_t)$ for all variables $x$ occurring in $w$, then the evaluation takes place entirely within the subsemiring $\End^0(\mP_t)$, which is isomorphic to the rook semiring $\mR_t$ by Proposition~\ref{prop:n-pod}. Since the identity $w=w^2$ holds in $\mR_t$, we have $w\varphi=w^2\varphi$ in this case. Otherwise, let $x$ be a variable occurring in $w$ such that $x\varphi\notin\End^0(\mP_t)$, and fix a decomposition of the word $w$ as $w=w'xw''$, where $w'$ and $w''$ are some (possibly empty) words. As shown in Proposition~\ref{prop:n-pod}, $x\varphi$ is a constant map, and thus satisfies $\alpha\cdot x\varphi=x\varphi$ for all $\alpha\in\End(\mP_t)$. Using this, we compute
\begin{align*}
  w\varphi&=w'\varphi\cdot x\varphi\cdot w''\varphi=x\varphi\cdot w''\varphi, \\
  w^2\varphi&=(ww')\varphi\cdot x\varphi\cdot w''\varphi=x\varphi\cdot w''\varphi.
\end{align*}
(If $w'$ or $w''$ is empty, then $w'\varphi$, or respectively $w''\varphi$, is understood as the identity endomorphism.) Hence, $w\varphi=w^2\varphi$ in this case as well.

Applying the fact just established to the identities $v_{n,2}^{(2)} = (v_{n,2}^{(2)})^2$ and $v_{n,6}^{(4)} = (v_{n,6}^{(4)})^2$, which, by Proposition~\ref{prop:rook2and3}, hold in the rook semirings $\mathcal{R}_2$ and $\mathcal{R}_3$, respectively, for all $n \ge 2$, we conclude that these identities also hold in the semirings $\End(\mP_2)$ and $\End(\mP_3)$, respectively. Proposition~\ref{prop:B21} now applies, showing that both semirings are \nfb.
\end{proof}

The proof of Theorem~\ref{thm:end} is now complete.

\section{Future work}
\label{sec:conclude}

Theorem~\ref{thm:end} provides a complete classification of the endomorphism semirings of finite semilattices with respect to the property of being finitely based or \nfb. Its proof also involves two stronger notions of nonfinite basability: inherent and strong nonfinite basability. From Claims~\ref{claim:if} and~\ref{claim:infb} in Section~\ref{sec:end}, one can immediately extract a complete classification of the endomorphism semirings of finite \emph{chains} with respect to all three notions of nonfinite basability, thereby improving on the results of \cite{GusVol25}.
\begin{corollary}
\label{cor:chains} For the $n$-element chain $\mC_n$, the following are equivalent:
\begin{enumerate}
  \item[\emph{(i)}] the semiring $\End(\mC_n)$ is inherently \nfb;
  \item[\emph{(ii)}] the semiring $\End(\mC_n)$ is strongly \nfb;
  \item[\emph{(iii)}] the semiring $\End(\mC_n)$ is \nfb;
  \item[\emph{(iv)}] the inequality $n\ge 3$ holds.
\end{enumerate}
\end{corollary}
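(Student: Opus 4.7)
The plan is to observe that the corollary is essentially a direct specialization of Claims~\ref{claim:if} and~\ref{claim:infb} to the particular case where the semilattice is a chain, combined with the standard implications among the three notions of nonfinite basability. Specifically, as noted in Sections~\ref{subsec:infb}--\ref{subsec:snfb}, every inherently \nfb{} finite \ais{} is strongly \nfb{} (because every finitely generated variety is locally finite), and every strongly \nfb{} \ais{} is \nfb{} (immediate from the definition). Thus the implications (i)\,$\Rightarrow$\,(ii)\,$\Rightarrow$\,(iii) hold for any finite \ais{} and require no further argument.

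For the implication (iii)\,$\Rightarrow$\,(iv), I would argue by contraposition. If $n\le 2$, then $\mC_n$ is a semilattice of size at most 2, and Claim~\ref{claim:if} asserts that $\End(\mC_n)$ is \fb{}, contradicting (iii).

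For (iv)\,$\Rightarrow$\,(i), note that the $n$-element chain $\mC_n=(\{0,1,\dots,n-1\},+)$, viewed order-theoretically, is itself a maximal chain in the partially ordered set $(\{0,1,\dots,n-1\},\leqslant)$ and thus has height exactly $n$. If $n\ge 3$, the height of $\mC_n$ is at least 3, so Claim~\ref{claim:infb} applies directly and yields that $\End(\mC_n)$ is inherently \nfb.

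There is no real obstacle here: once Claims~\ref{claim:if} and~\ref{claim:infb} are in hand, the corollary follows by a one-line invocation of each, together with the elementary chain of implications between the three flavors of nonfinite basability. The only point to record carefully is that the $n$-chain has height $n$, so that the height hypothesis in Claim~\ref{claim:infb} is triggered precisely by $n\ge 3$.
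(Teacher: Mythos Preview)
Your proposal is correct and matches the paper's own reasoning: the corollary is stated as an immediate consequence of Claims~\ref{claim:if} and~\ref{claim:infb} together with the standard implications (i)\,$\Rightarrow$\,(ii)\,$\Rightarrow$\,(iii) recalled in Sections~\ref{subsec:infb}--\ref{subsec:snfb}. The only detail you make explicit that the paper leaves tacit is that the height of $\mC_n$ equals $n$, which is exactly what is needed to trigger Claim~\ref{claim:infb} when $n\ge 3$.
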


However, for the endomorphism semirings of arbitrary finite semilattices, we still have no definitive results regarding to inherent and strong nonfinite basability. The critical issue here is the following
\begin{question}[{\!\cite[Problem 7.7(2)]{JackRenZhao22}}]
\label{ques:b21} Is the \ais{} $\mB_2^1$ inherently \nfb?
\end{question}

If the answer to Question~\ref{ques:b21} is affirmative, then it follows readily from the proof of Theorem~\ref{thm:end} that the three notions of nonfinite basability are equivalent for the endomorphism semirings of finite semilattices. However, we rather expect a negative answer.

As for the two other six-element \ais{}s from Section~\ref{subsec:b1a1}, the semiring $\mA_2^1$ is known to be \nfb{}. This was established in~\cite[Theorem 2]{GusVol25} by a rather involved argument based on the approach outlined in Section~\ref{subsec:kadourek}. The question of whether $\mA_2^1$ is inherently or strongly \nfb{} remains open---and so does the following
\begin{question}[{\!\cite[Question 9]{Dolinka09b}\footnote{We have already mentioned that in \cite{Dolinka09b}, the \ais{} $\overline{\mA_2^1}$ appears as $\mathbf{O}_2$.}}]
\label{ques:a21} Is the \ais{} $\overline{\mA_2^1}$ \ \fb?
\end{question}
The monoid $A_2^1$, that is, the multiplicative semigroup of the semirings  $\mA_2^1$ and $\overline{\mA_2^1}$, is inherently \nfb. This makes Question~\ref{ques:a21} particularly intriguing, as no example is currently known of a \fb{} \ais{} whose multiplicative semigroup is inherently \nfb. For comparison, we note that there do exist \fb{} \ais{}s whose multiplicative semigroups are strongly \nfb.

\subsection*{Acknowledgement} The authors are grateful to the anonymous referee for their feedback and helpful suggestions.

\small

\end{document}